\numberwithin{equation}{section}
\newtheoremstyle{thmlemcorr}{10pt}{10pt}{\itshape}{}{\bfseries}{.}{10pt}{{\thmname{#1}\thmnumber{ #2}\thmnote{ (#3)}}}
\newtheoremstyle{thmlemcorr*}{10pt}{10pt}{\itshape}{}{\bfseries}{.}\newline{{\thmname{#1}\thmnumber{ #2}\thmnote{ (#3)}}}
\newtheoremstyle{remexample}{10pt}{10pt}{}{}{\bfseries}{.}{10pt}{{\thmname{#1}\thmnumber{ #2}\thmnote{ (#3)}}}
\newtheoremstyle{ass}{10pt}{10pt}{}{}{\bfseries}{.}{10pt}{{\thmname{#1}\thmnumber{ A#2}\thmnote{ (#3)}}}
\theoremstyle{thmlemcorr}
\newtheorem{theorem}{Theorem}
\numberwithin{theorem}{section}
\newtheorem{lemma}[theorem]{Lemma}
\newtheorem{proposition}[theorem]{Proposition}
\newtheorem{definition}[theorem]{Definition}
\theoremstyle{thmlemcorr*}
\newtheorem{theorem*}{Theorem}
\newtheorem{lemma*}[theorem]{Lemma}
\newtheorem{corollary*}[theorem]{Corollary}
\newtheorem{proposition*}[theorem]{Proposition}
\newtheorem{problem*}[theorem]{Problem}
\newtheorem{conjecture*}[theorem]{Conjecture}
\newtheorem{definition*}[theorem]{Definition}
\theoremstyle{remexample}
\newtheorem{remark}[theorem]{Remark}
\theoremstyle{ass}
\DeclareMathOperator{\cof}{cof}
\newcommand{\T}{\mathbb{T}}
\DeclareMathOperator{\Div}{div}
\newcommand{\abs}[1]{|#1|}
\newcommand{\R}{\mathbb{R}}
\newcommand{\vr}{\varrho}
\newcommand{\vu}{v}
\newcommand{\vh}{b}
\newcommand{\ep}{\varepsilon}
\title{Dissipative measure valued solutions for general  conservation laws}
\author{Piotr Gwiazda, Ond\v rej Kreml
and Agnieszka {\'{S}}wierczewska-Gwiazda}
\thanks{O.K. acknowledges the support of the Neuron Impuls Junior project "Mathematical analysis of hyperbolic conservation laws" in the general framework of RVO: 67985840. This work was partially supported by the Simons - Foundation grant 346300 and the Polish Government MNiSW 2015-2019 matching fund.
A. \'S.-G. acknowledges the support of the National Science Centre, DEC-2012/05/E/ST1/02218. The research was partially supported by the Warsaw
Center of Mathematics and Computer Science. P.G  received support from the National Science Centre (Poland), 2015/18/M/ST1/00075.
}
\date{}
\begin{document}

\begin{abstract}
In the last years measure-valued solutions started to be considered as a relevant notion of solutions if they satisfy the so-called measure-valued -- strong uniqueness principle. This means that they coincide with a strong solution emanating from the same initial data if this strong solution exists.  This property has been examined for many systems of mathematical physics, including incompressible and compressible Euler system, compressible Navier-Stokes system et al. and there are also some results concerning general hyperbolic systems. Our goal is to provide a unified framework for general systems, that would cover the most interesting cases of systems, and most importantly, we give examples of equations, for which the aspect of measure-valued -- strong uniqueness has not been considered before, like incompressible magentohydrodynamics and shallow water magnetohydrodynamics. 
\end{abstract}
\maketitle

\section{Introduction}
The recent work of Brenier, De Lellis and Sz\'ekelyhidi~\cite{BrDeLeSz2011} significantly ennobled measu\-re-valued solutions of systems of fluid dynamics, as well as hyperbolic systems in general. They postulated a new principle surprisingly stating that measure-valued solutions, which were  expected to be non-unique to a large extent,   become unique once we know that a strong solution emanating from the same initial data exists.  In this case both solutions coincide on the time interval of existence of the strong solution. 
%
%
 What they called {\it weak-strong uniqueness for measure-valued solutions} is now usually called {\it measure-valued-strong uniqueness}, or {\it mv-strong uniqueness} for short. We favour the latter term, as it seems more adequate.
 The analysis in the case of incompressible Euler system is complete, as DiPerna and Majda had shown in~\cite{DiMa87} existence of measure-valued solutions to the incompressible Euler system exactly in the class which, per the result of Brenier  et al., possesses the property of mv-strong uniqueness. 
  
Careful analysis of the incompressible Euler system allowed the authors of~\cite{BrDeLeSz2011} to conjecture that an analogue property of mv-strong uniqueness  could hold in a more general setting. They had in fact initiated the studies on mv-strong uniqueness for general hyperbolic systems. Following this path, we  also direct our interest to a hyperbolic system of the form
\begin{equation}\label{system}
\partial_tA(u)+\partial_\alpha F_\alpha(u)=0
\end{equation}
with an initial condition $u(0)=u_0$. Here $u : [0,T]\times{\mathbb T}^d \rightarrow \overline X$, where $X\subset\R^n$ is an open convex set and by $\overline X$ we mean the closure of $X$. Moreover,   $A,F_\alpha: \overline X \rightarrow \R^n$, $\alpha = 1,...,d$, we use the Einstein summation convention and we denote $Q=[0,T]\times{\mathbb T}^d$, where 
${\mathbb T}^d$ is a $d-$dimensional torus. 

In~\cite{BrDeLeSz2011} the authors  
studied system \eqref{system} with $A(u)\equiv u$, however their result holds in a class where no existence result is available (and seems impossible to be proven).
This limitation is not particular only for such general systems, but persists even in special cases, including e.g. compressible Euler system, polyconvex elastodynamics or hyperbolic magnetohydrodynamics. 
The solution is in the form of a classical Young measure only (even satisfying a technical assumption that the first moment of this measure is in $L^\infty(Q)$), not a triple consisting of a classical Young measure and concentration and concentration angle measures.

 In parallel Demoulini et al.~\cite{tzavaras1} proved a corresponding result on mv-strong uniqueness for the system of polyconvex elastodynamics. And again the authors attempted to formulate a more general result for hyperbolic systems. Here the possibility of a concentration measure is allowed in the entropy inequality, not in the weak formulation of the system itself. This approach covers, among others, the case initially considered by the authors, i.e.~the system of polyconvex elastodynamics. 
For this system  the mv-strong uniqueness result is in the class coinciding with the class in which one shows existence of solutions. 
 However, this level of generality is still not sufficient to cover the case of abstract hyperbolic system, as well as e.g. Euler equations, where concentration measure appears also in the weak formulation. 
 
 Therefore there is still a need to dispose of assumptions that solutions satisfy any a priori bounds, and in particular, that a solution consists only of a classical Young measure. We find it of great importance to include possibilities of concentration measures appearing in all terms $A(u)$, the flux $F_\alpha(u)$ and an entropy function. A result on mv-strong uniqueness shall be deemed complete whenever the class of measure-valued solutions agrees with the class of an existence result. 
 
 Finally, we give a couple of examples of systems, for which the general result statement gives an original result of mv-strong uniqueness property, namely a system of shallow water magnetohydrodynamics described in Section~\ref{App} and incompressible magnetohydrodynamics described in Section~\ref{Extension}. Surely the list of new applications is not complete.

\subsection{Hypothesis}

Throughout the paper we will assume the following conditions hold.
\begin{enumerate}
\item[(H1)] There exists an open set $X \subset \R^n$ such that the mapping $A : \overline{X} \rightarrow \R^n$ is a $C^2$  map on $X$, continuous on $\overline X$  and satisfies
\begin{equation}\label{H1}
\nabla A(u) \quad \text{ is nonsingular } \forall u \in X. 
\end{equation}
\item[(H2)] The system \eqref{system} is endowed with a companion law
\begin{equation}\label{entropy}
\partial_t \eta(u) + \partial_\alpha q_\alpha(u) = 0
\end{equation}
with an entropy $\eta: \overline X\to \R_+$ which is 
a $C^2$  map on $X$, continuous on $\overline X$  and satisfies
 $\eta(u) \geq 0$ 
and 
\begin{equation}\label{eta-infty}
\lim\limits_{|u|\to\infty}\eta(u)=\infty.
\end{equation}
This yields the existence of a smooth function $G: X \rightarrow \R^n$ such that 
\begin{align}
\nabla \eta &= G \cdot \nabla A \label{H21}\\
\nabla q_\alpha &= G \cdot \nabla F_\alpha, \qquad \alpha = 1,...,d. \label{H22}
\end{align}
The conditions \eqref{H21}-\eqref{H22} are equivalent to
\begin{align}
\nabla G^T \nabla A &= \nabla A^T \nabla G \label{H23}\\
\nabla G^T \nabla F_\alpha &= \nabla F_\alpha^T \nabla G, \qquad \alpha = 1,...,d. \label{H24}
\end{align}
\item[(H3)] The symmetric matrix 
\begin{equation}\label{H3}
\nabla^2 \eta(u) - G(u)\cdot\nabla^2 A(u)
\end{equation}
is positive definite for all $u \in X$.
\item[(H4)] The vector $A(u)$ and the fluxes $F_\alpha(u)$ are bounded by the entropy, i.e.
\begin{align}\label{H41}
|A(u)|&\le C\eta(u) \\
|F_\alpha(u)|&\le C\eta(u), \qquad \alpha = 1,...,d. \label{H42}
\end{align}
\item[(H5)] Defining for a strong solution $U$ taking values in a compact subset of $X$ the relative entropy
\begin{align}
\eta ( u | U ) &:= \eta (u) - \eta (U) - \nabla \eta (U)  \cdot \nabla A (U)^{-1} (A(u) - A(U)) \\ \nonumber
 &= \eta (u) - \eta (U) - G(U) \cdot (A(u) - A(U))
\end{align}
and defining the relative flux as 
\begin{equation}\label{estimates}
F_\alpha (u | U) := F_\alpha (u) - F_\alpha (U) - \nabla F_\alpha (U)  \nabla A (U)^{-1} (A(u) - A(U))
\end{equation}
for $\alpha = 1,...,d$ we assume it holds 
\begin{equation}\label{bound}
|F_\alpha (u | U)|\le C \eta ( u | U ). 
\end{equation}
\end{enumerate}
\begin{remark}\label{remark11}
Observe that in the above definitions the relative flux $F_\alpha (\cdot | U)$ and relative entropy $\eta ( \cdot | U )$
are continuous functions in $\overline X$. This follows directly from the continuity of $F_\alpha (\cdot)$ and 
$\eta ( \cdot)$. Note that there is an asymmetry, the relative functions are well defined for $u\in\overline X$, but for $U\in X$. 
\end{remark}
\begin{remark}\label{remark}
Note that if instead of (H4) we assume that 
\begin{equation}\label{H4thanos}
\lim_{\abs{u}\rightarrow \infty} \frac{|A(u)|}{\eta(u)} = 
0, \quad |F_\alpha(u)|\le C(1+\eta(u)), \qquad \alpha = 1,...,d,
\end{equation}
then (H5) follows directly from \eqref{H4thanos}, see Lemma~\ref{Lem:estimates} in the appendix.
\end{remark}
An analogue lemma under more restrictive assumptions
\begin{equation}\label{H4thanos-a}
\lim_{\abs{u}\rightarrow \infty} \frac{|A(u)|}{\eta(u)} = 
\lim_{\abs{u}\rightarrow \infty} \frac{|F_\alpha(u)|}{\eta(u)} = 
0, 
\end{equation}
was proved in \cite[Lemma A.1]{CleoTz}. Note however that \eqref{H4thanos-a} is not satisfied e.g. by compressible Euler equations. Any concentration in  terms $A$ and $F_\alpha$ are not present due to assumption \eqref{H4thanos-a}, which is a stronger requirement than (H4) assumed in the present paper. 
This however allowed the authors to omit the general representation of concentrations introduced in \cite{DiMa87} and \cite{AlBo}, because the concentration effect is considered just for the entropy, which is a non-negative  scalar function. Thus one can provide a simple derivation of weak limit as a Young measure and a concentration measure. Under slightly different assumptions on the entropy and in the same formulation as currently considered, i.e., $A(u)$ is not necessarily an identity, as in the aforementioned results, the issue of measure-valued-strong uniqueness was considered in \cite{CleoTz}. 

In the spirit of these results, the issue of mv-strong uniqueness was considered for various systems, including 
compressible Euler system and Savage-Hutter system describing granular media in \cite{GSW2015}, compressible Navier-Stokes in \cite{FGSW16} and complete compressible Euler system in \cite{BrFe}. An overview of these results is provided in \cite{tmna, Wiedemann}.
At this moment it is worth mentioning that the result  of B\v{r}ezina and Feireisl \cite{BrFe} does not fit in any of the presented frameworks for general hyperbolic systems, including also the framework presented in the current paper. Contrary to the other cases, they consider the full thermo-mechanical system. Thus a new element here is an appearance of the physical entropy. The system consisting of conservation of mass and conservation of momentum is not a closed system, as the pressure depends on the energy. To complete the system additional equation for the energy is considered. Then the role of an entropy $\eta$ should overtake a physical entropy, not as it was in the case of isentropic compressible Euler (as the system for the variables $\varrho, \vu$), when $\eta$ was the energy (kinetic and potential). In the setting of B\v{r}ezina and Feireisl the entropy inequality does not carry information that would allow to bound the flux $F_\alpha(u)$. We claim that appearance of thermal energy in the system results that the system does not fit into the approach initiated by Brenier et al.

The relative entropy method, which is fundamental for mv-strong uniqueness results, appears to be useful for other areas such as stability studies, asymptotic limits and dimension reduction problems (e.g. \cite{CleoTz}, \cite{GiTz}, \cite{FeJiNo}, \cite{BeFeNo}, \cite{BrKrMa}). 
Not only the systems describing phenomena of mathematical physics fall into these applications. Also results on problems arising from biology, cf. \cite{MiMiPe}, \cite{MiMiPe2}, \cite{Pe2007}, \cite{GwiWie}, can serve as examples.  The framework is known in this context as General Relative Entropy (GRE) and applies  for showing asymptotic convergence of solutions to steady-state solutions.
Finally we would like to underline how these results on  measure-valued solutions in fluid mechanics affected  certain numerical experiments, cf.~\cite{FMT16}.

\subsection{Dissipative measure-valued solutions}

Our interest is directed to the measure-valued-strong uniqueness principle for dissipative measure-valued solutions. We start with the motivation for our definition of measure valued solutions. 

Assume we have at hand a sequence of solutions $u^n$ solving some approximating problem
\begin{equation}\label{eq:app1}
\partial_tA(u^n)+\partial_\alpha F_\alpha(u^n)= P_n
\end{equation}
together with appropriate approximating entropy equation
\begin{equation}\label{eq:app2}
\partial_t\eta(u^n)+\partial_\alpha q_\alpha(u^n)= Q_n
\end{equation}
with $P_n,Q_n \rightarrow 0$ in appropriate topologies. Natural a priori bound for such problem is derived through the entropy equation \eqref{eq:app2} and yields 
\begin{equation}\label{eq:apriori}
\|\eta(u^n)\|_{L^\infty(0,T,L^1(\T^d))} \leq C.
\end{equation}
Due to our assumption (H4), see \eqref{H41}, we have the same $L^\infty(0,T,L^1(\T^d))$ bound for quantities $A(u^n)$ and $F_\alpha(u^n)$. Therefore due to Lemma~\ref{slicing} and Remark~\ref{rem-slicing} we are able to desintegrate concentration measures related to each of these quantities as follows
\begin{equation}\label{eq:desintegr}
m_f(dxdt) = m_f^t(dx) \otimes dt.
\end{equation}




Before defining solutions let us shortly describe the notation. By $\mathcal{P} \left(\overline X \right)$ we mean the set of probability measures on $\overline X$, $L^{\infty}_{\rm weak}\left( (0,T) \times \T^d; \mathcal{P} \left(\overline X\right) \right)$  stands for the space of weakly-star essentially bounded measurable maps with values in $\mathcal{P} \left(\overline X \right)$. We mean by  $\mathcal{M}({[0,T]\times\T^d})$  the space of measures on  ${[0,T]\times\T^d}$ and $ \mathcal{M}^+({[0,T]\times\T^d})$ refers to positive measures. 

\begin{definition}\label{def}
 We say that  $(\nu, m_A, m_{F_\alpha}, m_\eta)$, $\alpha = 1,...,d$, is a dissipative measure-valued solution of system~\eqref{system} with initial data $(\nu_{0,\cdot},m_A^0,m_\eta^0)$ if $\{ \nu_{t,x} \}_{(t,x) \in (0,T) \times \T^d }$, $\nu \in L^{\infty}_{\rm weak}\left( (0,T) \times \T^d; \mathcal{P} \left(\overline X \right) \right)$ is a parameterized measure 
and  together with concentration measures
$m_A\in (\mathcal{M}({[0,T]\times\T^d}))^n$,  $m_{F_{\alpha}}\in (\mathcal{M}({[0,T]\times\T^d}))^{n\times n}$ satisfy 
\begin{equation}\label{weak-form}
\begin{split}
\int_Q\langle \nu_{t,x}, A(\lambda)\rangle \cdot\partial_t \varphi dxdt +\int_Q \partial_t \varphi\cdot m_A(dxdt)+ \int_Q\langle \nu_{t,x}, F_\alpha(\lambda)\rangle\cdot\partial_\alpha \varphi dxdt\\
+\int_Q \partial_\alpha \varphi\cdot m_{F_{\alpha}}(dxdt)  
+\int_{\T^d}\langle \nu_{0,x}, A(\lambda)\rangle\cdot\varphi(0) dx+\int_{\T^d} \varphi(0)\cdot m_A^0(dx)=0
\end{split}
\end{equation}
for all $\varphi\in C^\infty_c(Q)^n$. 
Moreover, the total entropy balance holds for all $\zeta\in C^\infty_c([0,T))$
\begin{equation}\label{energy}
\begin{split}
\int_Q \langle \nu_{t,x}, \eta(\lambda)\rangle \zeta'(t) dxdt +\int_Q \zeta'(t)m_\eta(dxdt)+\int_{\T^d}\langle \nu_{0,x}, \eta(\lambda)\rangle\zeta(0) dx 
\\
+\int_{\T^d}\zeta(0)m_\eta^0(dx)\ge0
\end{split}
\end{equation}
with a dissipation measure $m_\eta\in  \mathcal{M}^+({[0,T]\times\T^d})$. 

\end{definition}

Throughout our paper we always assume that there exists a generating sequence of approximate solutions to the system \eqref{system}. Therefore we introduce the following definition.
\begin{definition}\label{gener}
 We say that  the dissipative measure-valued solution $(\nu, m_A, m_{F_{\alpha}}, m_\eta)$, $\alpha = 1,...,d$, of system~\eqref{system} is generated by a sequence of approximate solutions if there exists sequences $u^n$, $P_n$ and $Q_n$ such that \eqref{eq:app1}-\eqref{eq:app2} hold in the sense of distributions, $P_n$ and $Q_n$ converge to zero in distributions and 
 \[
f(u^n(t,x))dxdt \stackrel{*}{\rightharpoonup} \langle\nu_{t,x},f(\lambda)\rangle dxdt + m_f
\]
 hold for $f = A,F_\alpha$ and $\eta$.
\end{definition}

Our main theorem reads as follows.

\begin{theorem}\label{t:main}
Assume that hypothesis (H1)-(H5) hold. Let $(\nu, m_A, m_{F_{\alpha}}, m_\eta)$, $\alpha = 1,...,d$, be a dissipative measure-valued solution to~\eqref{system} generated by a sequence of approximate solutions. Let 
$U\in W^{1,\infty}(Q)$ be a strong solution to~\eqref{system} with the same initial data $u_0\in L^1(\R^d)$, thus 
$\nu_{0,x}=\delta_{u_0(x)}$, $m_A^0=m_\eta^0=0$. Then $\nu_{t,x}=\delta_{U(t,x)}$  a.e. in $Q$ and $m_A=m_{F_{\alpha}}=m_\eta=0$.
\end{theorem}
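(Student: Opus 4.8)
The plan is to run the relative entropy (weak--strong stability) method, adapted to the measure-valued setting in which concentrations are allowed in all three quantities $A$, $F_\alpha$ and $\eta$. The central object is the relative entropy functional
\[
\mathcal{H}(\tau) := \int_{\T^d}\langle\nu_{\tau,x},\eta(\lambda\,|\,U(\tau,x))\rangle\,dx \;+\; m_\eta^\tau(\T^d) \;-\; \int_{\T^d} G(U(\tau,x))\cdot m_A^\tau(dx),
\]
where the time slices $m_\eta^\tau$, $m_A^\tau$ are furnished by the desintegration of Lemma~\ref{slicing} and Remark~\ref{rem-slicing}. The first step is to record that $\mathcal{H}\ge 0$. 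For the Young-measure part this follows from (H3): the auxiliary function $\lambda\mapsto\eta(\lambda)-G(U)\cdot A(\lambda)$ is strictly convex by \eqref{H3}, and by \eqref{H21} its gradient vanishes at $\lambda=U$, so $U$ is its unique global minimizer and $\eta(\lambda|U)\ge 0$ with equality iff $\lambda=U$. For the concentration part one uses that the solution is generated by a single approximating sequence, which forces $m_A$ and $m_\eta$ to carry the same concentration directions; the recession form of the relative entropy, $\eta^\infty-G(U)\cdot A^\infty$, is then non-negative as a blow-up limit of the non-negative quantity $\eta(\lambda|U)$, and \eqref{H41} (which yields the domination $|m_A|\le C\,m_\eta$) makes the displayed concentration contribution non-negative as well.

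The second step derives a differential inequality for $\mathcal{H}$. Since $U\in W^{1,\infty}(Q)$ takes values in a compact subset of $X$ on which $G$ is smooth, the quantities $G(U)$ and $\partial_\alpha\bigl(G(U)\bigr)$ are bounded; after a standard mollification they may be inserted into \eqref{weak-form}, using $G(U)$ (multiplied by a temporal cut-off $\zeta$ concentrating near a time $\tau$) against the balance \eqref{weak-form} and $\zeta$ against the entropy balance \eqref{energy}. Adding these and subtracting the exact relations satisfied by the strong solution $U$---which solves \eqref{system} and the companion law \eqref{entropy} classically and for which \eqref{H21}--\eqref{H22} hold pointwise---yields, after rearrangement, an estimate of the form
\[
\mathcal{H}(\tau)\;\le\;\mathcal{H}(0)\;-\;\int_0^\tau\!\!\int_{\T^d}\partial_\alpha\bigl(G(U)\bigr)\cdot\langle\nu_{t,x},F_\alpha(\lambda|U)\rangle\,dx\,dt\;+\;(\text{flux concentration terms}).
\]
Invoking the structural bound \eqref{bound}, $|F_\alpha(\lambda|U)|\le C\,\eta(\lambda|U)$, together with the analogous domination of the flux concentration $m_{F_\alpha}$ by $m_\eta$ coming from \eqref{H42}, the entire correction is bounded by $C\int_0^\tau\mathcal{H}(t)\,dt$, so that $\mathcal{H}(\tau)\le \mathcal{H}(0)+C\int_0^\tau\mathcal{H}(t)\,dt$.

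The third step is the conclusion. The coincidence of initial data, $\nu_{0,x}=\delta_{u_0(x)}$ and $m_A^0=m_\eta^0=0$, gives $\mathcal{H}(0)=0$, whence Gr\"onwall's lemma forces $\mathcal{H}(\tau)=0$ for a.e.\ $\tau\in(0,T)$. Since each summand of $\mathcal{H}$ is non-negative, all vanish: the vanishing of the Young part together with the strict convexity from (H3) forces $\nu_{\tau,x}=\delta_{U(\tau,x)}$ a.e., while the vanishing of the concentration part together with $m_\eta\ge 0$ forces $m_\eta=0$, and then \eqref{H41}--\eqref{H42} force $m_A=m_{F_\alpha}=0$.

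The main obstacle I anticipate is the rigorous bookkeeping of the concentration measures. Unlike the scalar-entropy-only situation of \cite{CleoTz}, concentrations here appear simultaneously in $A$, $F_\alpha$ and $\eta$, so one must (i) justify that they are generated by a single direction structure---this is precisely where the assumption that the solution is generated by an approximating sequence is essential---(ii) give meaning to the products $G(U)\cdot m_A$ and $\partial_\alpha\bigl(G(U)\bigr)\cdot m_{F_\alpha}$ and control them by $m_\eta$ via \eqref{H41}--\eqref{H42}, and (iii) verify that the recession form of the relative entropy has a definite sign so that the concentration part of $\mathcal{H}$ is non-negative. A further delicate point is the temporal localization needed to pass from the distributional balances \eqref{weak-form}--\eqref{energy} to a pointwise-in-time inequality, together with the careful handling of the one-sided time limits of $\mathcal{H}$.
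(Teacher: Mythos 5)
Your overall strategy---the relative entropy functional augmented by the concentration contribution $m_\eta - G(U)\cdot m_A$, a Gr\"onwall inequality obtained by testing \eqref{weak-form} with $\zeta(t)G(U)$ and combining with \eqref{energy}, and control of the error terms by the relative entropy---is exactly the paper's route. Two supporting claims are imprecise, though. The pointwise bound $\eta(\lambda|U)\ge 0$ does not follow from convexity of $\lambda\mapsto\eta(\lambda)-G(U)\cdot A(\lambda)$: the Hessian of that map is $\nabla^2\eta(\lambda)-G(U)\cdot\nabla^2 A(\lambda)$ with $G$ frozen at $U$, which \eqref{H3} does not control; the paper instead passes to the variable $v=A(u)$, where $H=\eta\circ A^{-1}$ is uniformly convex by (H1)+(H3) and $\eta(u|U)=H(A(u)|A(U))$ is a Bregman divergence (see the proof of Lemma~\ref{Lem:estimates}). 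Likewise, the flux concentration correction cannot be handled through \eqref{H42} alone: what closes the Gr\"onwall argument is the bound \eqref{positive2} of the \emph{relative} flux concentration $m_{F_\alpha}-\nabla F_\alpha(U)\nabla A(U)^{-1}m_A$ by the \emph{relative} entropy concentration $m_\eta-m_A\cdot G(U)$, obtained in Proposition~\ref{p:meas} by applying Proposition~\ref{p:gener} (or the recession-function computation) to the pair $f=\eta(\cdot|U)$, $g=F_\alpha(\cdot|U)$ using (H5); a bound by $Cm_\eta$ is not controlled by $\mathcal{H}$. This identification of the concentration measures of the relative quantities is a substantial ingredient of the paper's proof that your proposal only gestures at as an ``anticipated obstacle.''

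The genuine gap is in your final step. Gr\"onwall gives $\mathcal{H}(\tau)=0$, hence $\nu_{\tau,x}=\delta_{U(\tau,x)}$ a.e.\ and, since $m_\eta-G(U)\cdot m_A\ge 0$ by \eqref{positive1}, the measure identity $m_\eta^\tau-G(U(\tau))\cdot m_A^\tau=0$. You assert that this ``forces $m_\eta=0$'' and that \eqref{H41}--\eqref{H42} then give $m_A=m_{F_\alpha}=0$. That inference fails: $G(U)\cdot m_A$ is signed, so $m_\eta=G(U)\cdot m_A$ does not imply $m_\eta=0$, and combining with the domination $|m_A|\le Cm_\eta$ only yields $m_\eta\le C\|G(U)\|_{L^\infty}\, m_\eta$, which is vacuous unless the constant happens to be below one. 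The paper closes this by a separate argument: once $\nu_{t,x}=\delta_{U(t,x)}$ is known, it subtracts the equation satisfied by the strong solution from \eqref{weak-form} to obtain $\int_Q\partial_t\varphi\cdot m_A(dxdt)+\int_Q\partial_\alpha\varphi\cdot m_{F_\alpha}(dxdt)=0$ for all test functions, concludes $m_A=m_{F_\alpha}=0$, and only then recovers $m_\eta=0$ from \eqref{eq:blbl}. You need this (or an equivalent) final step to complete the proof.
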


One of the key ingredients in the proof of Theorem \ref{t:main} is the following proposition stating relations between different concentration measures.
\begin{proposition}\label{p:meas}
Assume that the hypothesis (H1)-(H5) hold. Let $(\nu, m_A, m_{F_{\alpha}}, m_\eta)$, $\alpha = 1,...,d$, be a dissipative measure-valued solution to~\eqref{system} generated by a sequence of approximate solutions. Let $U\in W^{1,\infty}(Q)$ be a strong solution to~\eqref{system}. Then the dissipative measure valued solution $(\nu, m_A, m_{F_{\alpha}}, m_\eta)$ has the following properties:
\begin{enumerate}
\item[$(i)$] The concentration measure of the relative entropy $\eta(u|U)$ is equal to 
\[
m_\eta-m_A\cdot G(U)
\]
and 
\begin{equation}\label{positive1}
m_\eta-m_A\cdot G(U)\ge 0.
\end{equation}
\item[$(ii)$] The concentration measure of the relative  flux $F_\alpha(u|U)$ is equal to $$m_{F_{\alpha}} - \nabla F_\alpha(U)\nabla A(U)^{-1} m_A$$ and it is bounded by the concentration measure of the relative entropy, i.e.
\begin{equation}\label{positive2}
|m_{F_{\alpha}} - \nabla F_\alpha(U)\nabla A(U)^{-1} m_A| \leq C (m_\eta-m_A\cdot G(U)).
\end{equation}
\end{enumerate}
\end{proposition}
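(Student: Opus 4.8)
The plan is to work directly from the generating property of Definition~\ref{gener}: along the approximating sequence $u^n$ one has the weak-$*$ convergences
\[
A(u^n)\,dxdt \stackrel{*}{\rightharpoonup} \langle\nu_{t,x},A(\lambda)\rangle\,dxdt + m_A, \qquad F_\alpha(u^n)\,dxdt \stackrel{*}{\rightharpoonup} \langle\nu_{t,x},F_\alpha(\lambda)\rangle\,dxdt + m_{F_{\alpha}},
\]
and likewise for $\eta$. I would combine this with the fact that the strong solution $U\in W^{1,\infty}(Q)$ takes values in a compact subset $K\subset X$. On $K$ the maps $G$, $\nabla F_\alpha\,\nabla A^{-1}$, $A$, $\eta$, $F_\alpha$ are smooth, so $G(U)$, $\nabla F_\alpha(U)\nabla A(U)^{-1}$, $A(U)$, $\eta(U)$, $F_\alpha(U)$ are bounded continuous functions on $Q$. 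This regularity is what lets me pass these coefficients through the weak-$*$ limits.

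First I would establish the algebraic identities in $(i)$ and $(ii)$. Writing
\[
\eta(u^n|U)=\eta(u^n)-\eta(U)-G(U)\cdot\bigl(A(u^n)-A(U)\bigr),
\]
I regard the right-hand side as a linear combination, with bounded continuous $(t,x)$-dependent coefficients, of the sequences $\eta(u^n)$ and $A(u^n)$ plus terms depending only on $U$. Since multiplication by a bounded continuous function is compatible with weak-$*$ convergence of measures and respects the splitting into absolutely continuous and concentration parts, I can pass to the limit termwise. Because $\nu_{t,x}$ is a probability measure, the absolutely continuous part collects to $\langle\nu_{t,x},\eta(\lambda|U)\rangle\,dxdt$, while the concentration part is exactly $m_\eta-G(U)\cdot m_A$, giving $(i)$. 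The identical computation applied to $F_\alpha(u^n|U)$ produces the concentration measure $m_{F_{\alpha}}-\nabla F_\alpha(U)\nabla A(U)^{-1}m_A$ of $(ii)$. Both sequences are bounded in $L^1(Q)$ by (H4), so these concentration measures are well defined.

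Next I turn to the sign and bound assertions \eqref{positive1} and \eqref{positive2}, which carry the main content. The hypothesis (H3) forces the relative entropy to be nonnegative: differentiating $\nabla\eta=G\cdot\nabla A$ gives $\nabla^2\eta-G\cdot\nabla^2A=\nabla A^T\nabla G$, so in the variable $w=A(u)$ the relative entropy is the Bregman divergence of the convex potential $w\mapsto\eta(A^{-1}(w))$; hence $\eta(u^n|U)\ge 0$ pointwise. From (H5) I also obtain $C\,\eta(u^n|U)\mp\theta\cdot F_\alpha(u^n|U)\ge 0$ for every fixed vector $\theta$ with $|\theta|\le 1$. The key structural fact I will invoke --- part of the generalized Young-measure framework behind Lemma~\ref{slicing} (cf.~\cite{DiMa87,AlBo}) --- is that the concentration measure generated by a nonnegative, $L^1$-bounded sequence is itself nonnegative. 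Applying this to $\eta(u^n|U)$ yields $m_\eta-G(U)\cdot m_A\ge 0$, which is \eqref{positive1}; applying it to $C\,\eta(u^n|U)\mp\theta\cdot F_\alpha(u^n|U)$ and using the linearity of the concentration part established above gives
\[
C\bigl(m_\eta-G(U)\cdot m_A\bigr)\mp\theta\cdot\bigl(m_{F_{\alpha}}-\nabla F_\alpha(U)\nabla A(U)^{-1}m_A\bigr)\ge 0
\]
for all $|\theta|\le 1$; taking the supremum over $\theta$ (with a standard localization to pass from constant to measurable directions) converts this into the total-variation bound \eqref{positive2}.

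The step I expect to be the genuine obstacle is precisely the nonnegativity of the concentration measure of a nonnegative sequence, together with the assertion that the concentration part depends linearly on the generating function and transforms correctly under multiplication by the continuous coefficients $G(U)$ and $\nabla F_\alpha(U)\nabla A(U)^{-1}$. Everything else is bookkeeping with weak-$*$ limits; this is where the precise representation theory behind Lemma~\ref{slicing} must be used with care, in particular to guarantee that $\langle\nu_{t,x},\eta(\lambda|U)\rangle$ and $\langle\nu_{t,x},F_\alpha(\lambda|U)\rangle$ are genuinely the absolutely continuous parts and that no concentration mass is miscounted.
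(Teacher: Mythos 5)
Your argument is correct, and it is essentially a blend of the two proofs the paper gives for Proposition~\ref{p:meas}. The identification of the concentration measures of $\eta(u|U)$ and $F_\alpha(u|U)$ by linearity of the weak-$*$ decomposition under multiplication by the bounded continuous coefficients $G(U)$ and $\nabla F_\alpha(U)\nabla A(U)^{-1}$ is what the paper's second proof does, except that the paper packages the computation in the language of modified recession functions, cf.\ \eqref{RErec}--\eqref{RFrec}; your version has the small advantage of not needing the existence of the recession functions \eqref{finfty}, which the paper explicitly lists as an additional assumption of that route. For the inequalities \eqref{positive1}--\eqref{positive2} you apply the nonnegativity of the concentration part of a nonnegative $L^1$-bounded sequence to $\eta(u^n|U)$ and to $C\eta(u^n|U)\mp\theta\cdot F_\alpha(u^n|U)$ and then dualize over unit vectors $\theta$; the paper instead proves Proposition~\ref{p:gener} (an asymptotic bound $|g|\le Cf$ implies $|m_g|\le Cm_f$) by estimating the Radon--Nikodym derivative $D_{m_f}|m_g|$ via the escaping-mass characterization of the concentration measure, and applies it with $f=\eta(\cdot|U)$, $g=F_\alpha(\cdot|U)$. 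Both routes rest on the same underlying fact --- that the concentration part is linear and monotone in the integrand, which is exactly the ``key structural fact'' you flag --- and the paper's Radon--Nikodym argument is the rigorous substitute for it, delivering the comparison on every Borel set at once where your $\theta$-duality needs the (routine) localization you mention. Your observation that (H3) makes $\eta(u|U)$ a Bregman divergence of the convex potential $\eta\circ A^{-1}$, hence nonnegative, is likewise the mechanism used in the paper's Lemma~\ref{Lem:estimates}; nothing essential is missing from your proposal.
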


\subsection{Historical perspective}
Measure-valued solutions, despite being a relatively weak notion of solutions, play an important role in modern analysis of nonlinear systems of partial differential equations. The basic concept behind this approach is to embed the problem into a wider space. Instead of considering sequences solving approximate problems, which are some measurable functions, one passes to the level of parametrized  measures. The benefit of this idea is passing from a nonlinear problem to a linear one. The  essence of the proof of existence of such solutions becomes a matter of appropriate estimates rather than 
subtle weak sequential stability arguments. There is of course a cost to be paid -- the result of a limit is only a weak object represented by a {\it Young measure}, namely by a parametrized family of measures.  

 This framework begun with a celebrated paper of Young~\cite{young}, see also~\cite{ball} for a summary of the concept of Young measures. Later, Tartar~\cite{T79} and DiPerna~\cite{DiPerna} applied this approach to define measure-valued solutions   to scalar conservation laws
and, as a bystep in the proof of existence of entropy weak solutions, showed uniqueness of entropy measure-valued solutions (we mean by that solutions satisfying in addition a variant of entropy inequality for measures).


%
%
%
The next breakthrough is due to DiPerna and Majda who directed their attention to the incompressible Euler system. 
 Here, sequences of approximate solutions may not only oscillate, but also concentrate. Thus the original Young measure, capable of handling oscillations only, was insufficient to fully characterize weak limits of such sequences. An extension to {\it generalized Young measures} (or {\it DiPerna-Majda measures}) was later proposed, see~\cite{DiMa87} and also~\cite{AlBo} for some refinements. A measure-valued solution was then defined not    only as a  Young measure, but a triple describing oscillations, concentrations and concentration angle.   
%
Since this framework transfers to other systems and to general case as well we provide the full details in Section~\ref{ss:GYM}.

We direct our interest to measure-valued solutions to hyperbolic conservation laws. 
Unlike in the scalar case, for systems of conservation laws we cannot show uniqueness of entropy measure-valued solutions. The main obstacle to formulate analogous result is that, in most cases, we are equipped with only one entropy-entropy flux pair, contrary to a rich family of entropies available in the scalar case. Even more, the corresponding relative entropy inequality lacks appropriate symmetry. 

 For most systems of mathematical physics it is well known that even weak solutions may fail to be unique.
Only some conditional uniqueness can be claimed.
This conditional uniqueness property had been studied for many systems of fluid mechanics. First, in their classical papers, Prodi~\cite{Pro} and Serrin~\cite{Ser} had shown that a weak solution to the incompressible Navier-Stokes equations is unique and coincides with the strong solution, provided such a strong solution is known to exist. For conservation laws a conditional uniqueness of weak solutions was established firstly by Dafermos in~\cite{Da2000}. 
This is somehow an extension of the result on uniqueness of strong solutions (cf.~\cite{majda}), asserting that they are unique not only in the class of strong solutions, but also in the wider class of entropy weak solutions. 
This property became known as {\it weak-strong uniqueness}.


It was discovered, rather surprising, that the class of entropy weak solutions in the above can be widened to the class of measure-valued solutions which satisfy some kind of entropy inequality.
One can ask - {\it Is it to the benefit?} After all, measure-valued solutions seem a very weak notion and, admittedly, carry hardly any information about the physical problem. Nevertheless, measure-valued solutions, intimately related to Young measures, prove to be a powerful tool in the analysis of nonlinear PDEs. 

Numerous results on mv-strong uniqueness for various systems have already been described at the beginning of the introduction, as well as some of the results which concern a general hyperbolic case.

%


\section{Applications}\label{App}

In this section we provide a short list of applications of the general theory presented above. The first impression is that the general framework cannot cover e.g. incompressible Euler system. In Section~\ref{Extension} we show that a slight refinement allows to include not only incompressible Euler system, but also incompressible magnetohydrodynamics. 

\subsection{Compressible Euler system}\label{ss:CE}
The compressible Euler system is the following system of equations
\begin{align}
 \label{eq:CE1}
        \partial_t \rho+ \Div_x(\rho\vu) &= 0 \\ \label{eq:CE2}
        \partial_t (\rho\vu) + \Div_x(\rho\vu\otimes \vu) + \nabla_x p(\rho) &= 0,
\end{align}
for an unknown vector field $\vu\colon Q \to \R^n$
and scalar  $\rho\colon Q \to \R$. 
 The pressure $p(\rho)$ is a given function and if $p'(\rho) > 0$, the resulting system is a hyperbolic system of conservation laws. The associated entropy is given by
\begin{equation}
\eta(\rho,\vu) = \frac 12 \rho \abs{\vu}^2 + P(\rho),
\end{equation}
here the pressure potential $P(\rho)$ is related to the original pressure $p(\rho)$ through 
\begin{equation}\label{pr-potential}
P(\rho) = \rho \int_1^\rho\frac{p(r)}{r^2}dr.
\end{equation}
We assume the pressure satisfies the  following assumptions
\begin{equation} \label{BB6}
p \in C[0, \infty) \cap C^2(0, \infty), \ p(0) = 0, \ p'(\vr) > 0 \ \mbox{for}\ \vr > 0, \end{equation}
and
\begin{equation}
\liminf_{\vr \to \infty} p'(\vr) > 0,\
\liminf_{\vr \to \infty} \frac{P(\vr)}{p(\vr)} > 0.
\end{equation}

Since the quantity $\rho$ represents the physical density, we want it to be nonnegative, hence $X = (0,\infty) \times \R^n$ and $\overline{X} = [0,\infty) \times \R^n$.


We will show that the system satisfies the assumptions of Theorem~\ref{t:main} and fits into the presented framework.  We choose the variable $u$ to be $u=(u_1, u_2)=(\rho, \sqrt{\rho}\vu), u_1\in[0,\infty)$, $u_2\in\R^n$. Note that we have some freedom in choosing the variables, however keeping in mind that $A$ and $F_\alpha$ need to be continuously extendable from $X$ to $\overline X$. Our choice of variables is convenient
 for  further estimates. Note however that if $u=(\rho, m)$ with $m=\rho v$, then the second component of the flux $F_\alpha$ having then the form $\frac{m\otimes m}{\rho}$
does not extend continuously to $\overline X$. Nevertheless, in the chosen variables $u=(u_1, u_2)=(\rho, \sqrt{\rho}\vu)$ we have (denoting by $I_n$ the $n \times n$ identity matrix)
\begin{equation}
A(u)=\left(\begin{array}{c}u_1\\ \sqrt{u_1}u_2\end{array}\right),\quad F(u)=(F_1,\ldots,F_n)(u) = \left(\begin{array}{c}\sqrt{u_1}u_2\\ u_2\otimes u_2+p(u_1)I_n\end{array}\right).
\end{equation}
The entropy in these variables has  a form
\begin{equation}
\eta(u)=\frac{1}{2}|u_2|^2+P(u_1).
\end{equation}
Obviously hypothesis (H1) and (H2) are satisfied with
\begin{equation}
G(u)=\left(\begin{array}{c}P'(u_1)-\frac{1}{2u_1}|u_2|^2\\ \frac{u_2}{\sqrt{u_1}}\end{array}\right)
\end{equation}
and the matrix $\nabla^2\eta(u)-G(u)\cdot\nabla^2A(u)$ is equal to
\begin{equation}
\left(\begin{array}{cc}P''(u_1)+\frac{1}{4}\frac{|u_2|^2}{u_1^2} & -\frac{1}{2}\frac{u_2}{u_1} \\ -\frac{1}{2}\frac{u_2}{u_1} & I_n\end{array}\right)
\end{equation}
and is positive definite, hence (H3) is satisfied. Instead of checking hypothesis (H4)-(H5)  we will  check that \eqref{H4thanos} holds, see Remark~\ref{remark11}.

We want to show that 
\begin{equation}\label{A-eta}
\frac{|A(u)|}{\eta(u)}\to0
\end{equation}
as $|u|\to\infty$.
Observe firstly that 
\begin{equation}\label{Plog}
P(\rho)\ge \rho\log\rho
\end{equation}
for every $\rho\ge0$. Consider the convex functions 
$M_1(\rho):=\rho^2\sqrt{\log(\rho+1)}$ and  $M_2(\rho):=\rho^2$. It holds that 
\begin{equation}
\forall\lambda >0 \quad\lim\limits_{\rho\to\infty}\frac{M_2(\lambda \rho)}{M_1(\rho)}=0,
\end{equation}
indeed, as $\rho\to0$
\begin{equation}
\frac{\lambda^2 \rho^2}{\rho^2\sqrt{\log(\rho+1)}}=\frac{\lambda^2}{\sqrt{\log(\rho+1)}}\to0.
\end{equation}
This is equivalent to saying that the function $M_1$ is essentially stronger than $M_2$ (for the definition and the facts used in the sequel see Appendix \ref{s:convex}). Define  the Fenchel conjugate to $M$ as $M^*(\xi):=\sup_{\rho}(\xi\cdot\rho-M(\rho))$. Then the corresponding relation for  the conjugate functions reads as $M^*_2$ is essentially stronger than $M^*_1$ and as $M^*_2(\xi)=\xi^2$, then in particular
\begin{equation}\label{Mstar}
\lim\limits_{\xi\to\infty}\frac{M^*_1( \xi)}{M_2^*(\xi)}=\lim\limits_{\xi\to\infty}\frac{M^*_1( \xi)}{\xi^2}=0.
\end{equation}
The term $\sqrt{u_1}u_2$ is estimated with help of Fenchel-Young inequality as follows
\begin{equation*}
|\sqrt{u_1}u_2|\le M_1(\sqrt{u_1})+M^*_1(|u_2|).
\end{equation*}
This allows us to estimate 
\begin{equation}
\frac{|A(u)|}{\eta(u)}=\frac{u_1+|\sqrt{u_1}u_2|}{\frac{1}{2}|u_2|^2+P(u_1)}\le\frac{u_1}{P(u_1)}+
\frac{M_1(\sqrt{u_1})}{P(u_1)}+\frac{M_1^*(|u_2|)}{\frac{1}{2}|u_2|^2}.
\end{equation}
Taking into account \eqref{Plog} and  \eqref{Mstar} allows to conclude the above converges to zero as~$|u|\to\infty$. 
Moreover
\begin{equation}
\frac{|F(u)|}{\eta(u)}\le\frac{|\sqrt{u_1}{u_2}|+|u_2|^2+p(u_1)}{\frac{1}{2}|u_2|^2+P(u_1)}\le
\frac{\frac{1}{2}{u_1}+\frac{1}{2}|u_2|^2+|u_2|^2+p(u_1)}{\frac{1}{2}|u_2|^2+P(u_1)}
\end{equation}
thus the fraction is bounded and \eqref{H4thanos} is satisfied.
%
%
\begin{remark}
In the case $p(\rho) = \rho$ the pressure potential is given by $P(\rho) = \rho\log\rho$. In order to make the entropy $\eta(\rho,\vu)$ a nonnegative function, we have to add a proper constant, in this case the constant is $e^{-1}$, so we have
\[
\eta(\rho,\vu) = \frac 12 \rho \abs{\vu}^2 + \rho\log\rho + e^{-1}.
\]
Then the rest of the arguments follow the same lines. 
\end{remark}
\begin{remark}
Notice that condition~\eqref{A-eta} provides that the concentration measure related to a sequence $(A(u^n))$
will not appear. This can be immediately concluded from Proposition~\ref{p:gener}, which we prove later. Indeed, since \eqref{A-eta} provides that $$\lim\limits_{|u|\to\infty} |A(u)|\le C \lim\limits_{|u|\to\infty} \eta(u)$$ for any $C>0$, thus due to the Proposition~\ref{p:gener} $|m_A|\le C m_\eta$ also for any $C>0$ and hence $m_A\equiv 0$. 
\end{remark}

\subsection{Shallow water magnetohydrodynamics}

Consider the following system of equations of  shallow water magnetohydrodynamics
%
%
%

\begin{align}
    \label{eq:MHD_diver}
    \partial_t h+ \Div_x(h\vu) &= 0,\\
    \partial_t (h\vu) + \Div_x\left(h\vu\otimes \vu-h\vh\otimes \vh\right)+ \nabla_x (gh^2/2)&= 0,\\
    \partial_t (h\vh) + \Div_x(h\vh \otimes \vu-h\vu\otimes \vh) +\vu \Div_x(h\vh) &= 0,
\end{align}
where $g>0$ is the gravity constant, $h\colon Q\to\R_+$  is the thickness of the 
fluid, $\vu\colon Q\to\R^2$ is the velocity, $b\colon Q\to\R^2$ is the magnetic field. 
Note that once initially $\Div_x(h_0\vh_0)$ is zero, then $\Div_x(h\vh)$ vanishes for all times due to the transport equation for the magnetic field. Thus we can omit this term in further analysis. 

 We choose the  variables $u=(u_1, u_2, u_3)=(h, \sqrt{h}\vu, \sqrt{h} b)$, thus
\begin{equation}
A(u)=\left(\begin{array}{c}u_1\\ \sqrt{u_1}u_2\\\sqrt{u_1}u_3 \end{array}\right),\quad F(u) = (F_1,F_2)(u) =\left(\begin{array}{c}\sqrt{u_1}u_2\\ u_2\otimes u_2-u_3\otimes u_3+\frac{gu_1^2}{2}I_2\\u_3\otimes u_2-u_2\otimes u_3\end{array}\right)
\end{equation}
and the entropy 
\begin{equation}
\eta(u)=\frac{1}{2}|u_2|^2+\frac{1}{2}|u_3|^2+\frac{1}{2}gu_1^2.
\end{equation}
We observe that
\begin{equation}
G(u)=\left(\begin{array}{c}gu_1 - \frac{1}{2u_1}(|u_2|^2+|u_3|^2)\\ \frac{u_2}{\sqrt{u_1}}\\ \frac{u_3}{\sqrt{u_1}} \end{array}\right)
\end{equation}
and
\begin{equation}
\nabla^2\eta(u)-G(u)\cdot\nabla^2A(u) = \left(\begin{array}{ccc} g+\frac{|u_2|^2+|u_3|^2}{4u_1^2} & -\frac{1}{2}\frac{u_2}{u_1} & -\frac{1}{2}\frac{u_3}{u_1} \\ -\frac{1}{2}\frac{u_2}{u_1} & I_2 & 0 \\ -\frac{1}{2}\frac{u_3}{u_1} & 0 & I_2 \end{array}\right)
\end{equation}
and thus (H1)-(H3) are satisfied.

The appropriate estimates providing that \eqref{H4thanos} is satisfied follow the same lines as for compressible Euler system, the additional terms do not require any  new effort.


\subsection{Polyconvex elasticity}

In this section we consider the {\it system of elasticity}
\begin{equation}
\label{mainI}
\frac{\partial^2 y}{\partial t^2}=\nabla\cdot S(\nabla y),
\end{equation}
where $y \; : \; {Q} \times {\R}^+ \to{\R}^3$ stands for the motion, ${\mathbb F} = \nabla y$, $v = \partial_t y$,
and $S$ stands for the Piola-Kirchoff stress tensor
obtained as the gradient of a stored energy function, 
$S = \frac{\partial W}{\partial {\mathbb F}}$.  Here we assume that
$W$ is polyconvex, that is  $W({\mathbb F}) = G ( \Phi({\mathbb F}))$ where 
$G:{\mathbb M}^{3\times 3}\times{\mathbb M}^{3\times 3}\times \R \to [0,\infty)$
is a strictly convex function and $\Phi({\mathbb F}) = ({\mathbb F} ,\cof {\mathbb F}, \det {\mathbb F})\in
{\mathbb M}^{3\times 3}\times{\mathbb M}^{3\times 3}\times \R$
stands for the vector of null-Lagrangians: ${\mathbb F}$, the cofactor matrix $\cof {\mathbb F}$
and the determinant $\det {\mathbb F}$.
It is observed in~\cite{Dafermos1985} and~\cite{tzavaras1}
that this system can be embedded into the following symmetrizable hyperbolic system in a new dependent variable $\Xi=({\mathbb F},Z,w)$ taking values in
${\mathbb M}^{3\times 3}\times {\mathbb M}^{3\times 3}\times\R$
\begin{equation}
\begin{aligned}
\frac{\partial v_i}{\partial t}&=\frac{\partial}{\partial x^\alpha}\left(\frac{\partial G}{\partial\Xi^A}(\Xi)\frac{\partial\Phi^A}{\partial
{\mathbb F}_{i\alpha}}({\mathbb F})\right),\\
\frac{\partial\Xi^A}{\partial t}&=\frac{\partial}{\partial
x^\alpha}\left(\frac{\partial\Phi^A}{\partial
{\mathbb F}_{i\alpha}}({\mathbb F})v_i\right).
\end{aligned}
\end{equation}
This system admits the following entropy-entropy flux pair
\begin{equation}
\begin{aligned}
\eta(v,{\mathbb F},Z,w)&=\frac{1}{2}|v|^2+G({\mathbb F},Z,w),\\
q_\alpha&=v_i\,\frac{\partial G}{\partial\Xi^A}(\Xi)\frac{\partial\Phi^A}{\partial {\mathbb F}_{i\alpha}}({\mathbb F}).
\end{aligned}
\end{equation}
A strong solution to ~\eqref{mainI} is a function $y\in W^{2,\infty}$. It automatically satisfies
\begin{equation}\label{elastodynentropy}
\partial_t \eta(y) + \partial_\alpha q_\alpha(y) = 0. 
\end{equation}
\noindent
Under the following additional growth assumptions on the function $G$:
\begin{itemize}
\item[(A1)] $G \in C^3({\mathbb M}^{3\times 3} \times {\mathbb M}^{3\times 3} \times \R ; [0,\infty))$ is a strictly convex 
function satisfying for some $C >0$ the bound $D^2 G \ge C > 0$,

\item[(A2)] $G({\mathbb F},Z,w) \ge c_1 ( |{\mathbb F}|^p +  |Z|^q +  |w|^r + 1)  - c_2$ where $p\in (4, \infty), \ \ q, r \in [2,\infty)$,

\item[(A3)] $G({\mathbb F},Z,w) \le c (  |F|^p +  |Z|^q + |w|^r +1)$,

\item[(A4)] $| \partial_{\mathbb F} G| + |\partial_Z G|^{\frac{p}{p-1}} + |\partial_w G|^{\frac{p}{p-2} } \le o(1) (  |{\mathbb F}|^p +  |Z|^q + |w|^r +1)$ \quad where $o(1) \to 0$ as $|\Xi| \to \infty$,
\end{itemize}
an existence of dissipative measure-valued solutions as well as a weak-strong uniqueness result are proven, cf. \cite{DeStTz2, tzavaras1}.
According to the discussion in the Introduction it is enough to show that conditions~\eqref{H4thanos-a} are satisfied and thus (H5) follows. 
$$A(u)=\left(\begin{array}{c}v\\ {\mathbb F}\end{array}\right), \quad F_\alpha(u)=\left(\begin{array}{c}\frac{\partial G}{\partial {\mathbb F}}(\Xi)
\frac{\partial {\mathbb F}}{\partial {\mathbb F}_{\alpha}}\\\frac{\partial {\mathbb F}}{\partial {\mathbb F}_{\alpha}}v\end{array}\right).$$
By condition (A2) we conclude that 
\begin{equation}
\lim\limits_{|u|\to\infty}\frac{|A(u)|}{\eta(u)}=0.
\end{equation}
The combination of conditions (A2) and (A4) provides that 
\begin{equation}
\lim\limits_{|u|\to\infty}\frac{|F_\alpha(u)|}{\eta(u)}=0.
\end{equation}
For the discussion on the remaining assumptions (H1)-(H3) we refer the reader to~\cite{CleoTz}.

\section{Relations between concentration measures}



Our aim in this section is to prove Proposition \ref{p:meas}. We provide two proofs, the first 
one works with the Radon-Nikodym derivatives of measures, whereas the second one
relates our concept of dissipative measure valued solutions to the framework of generalized Young measures and is in its core based on the slicing lemma for products of measures. In particular, in the second proof we have to assume that the modified recession functions (for definition see below) exist for nonlinear functions appearing in our problem. 


\subsection{Radon-Nikodym derivatives of concentration measures}

Let us assume that we have a sequence of functions $u^n(x) : Y \rightarrow X$, here $Y$ is an underlying physical space, in the applications $Y = [0,T] \times \T^d$, and $X \subset \R^N$.

We recall the definition of the concentration measure related to a nonnegative nonlinear function $f: Y \times X \rightarrow \R^+$. The concentration measure $m_f$ is a nonnegative Radon measure such that
\[
\langle m_f,\chi \rangle  := \lim_{k\rightarrow\infty} \lim_{n \rightarrow\infty} \int_{Y \cap \{f \geq k\}} f(y,u^n(y))\chi(y) dy
\]
for all $\chi \in C_c(Y)$, $\chi \geq 0$.

Let $h: Y \times X \rightarrow \R^+$ be a nonnegative function satisfying 
\[
h(y,u) \leq Cf(y,u)
\]
for all $y \in Y$ and all $u \in X$. Then it is easy to observe that
\[
\{y; Cf(y,u(y)) \geq k \} \supset \{y; h(y,u(y)) \geq k\}
\]
and therefore
\[
\langle m_h,\chi \rangle   \leq \lim_{k\rightarrow\infty} \lim_{n \rightarrow\infty} \int_{Y \cap \{f \geq k\}} h(y,u^n(y))\chi(y) dy.
\]

If $g(y,u)$ is not a nonnegative function, we can split it into its positive and negative part
\[
g(y,u) = g^+(y,u) - g^-(y,u),
\]
and
\[|g(y,u)| = g^+(y,u) + g^-(y,u),\]
where both $g^+$ and $g^-$ are nonnegative. 
Thus we have 
\begin{equation}
\abs{\langle m_g,\chi\rangle} = \abs{\langle m_{g^+},\chi\rangle  - \langle m_{g^-},\chi\rangle } \leq \langle m_{g^+},\chi\rangle  + \langle m_{g^-},\chi\rangle = \langle m_{\abs{g}},\chi\rangle .
\end{equation}
Finally, just using the same argument componentwise and using as a norm for vectors in $\R^N$ the $l^\infty$ norm, we get the same for vector-valued functions $g$ and thus vector-valued concentration measures $m_g$. In particular if we assume
\[
\abs{g(y,u)} \leq C f(y,u)
\]
for a nonnegative function $f$ and a vector-valued function $g$, we end up with
\[
\abs{\langle m_g,\chi\rangle }  \leq \lim_{k\rightarrow\infty} \lim_{n \rightarrow\infty} \int_{Y \cap \{f \geq k\}} \abs{g(y,u^n(y))}\chi(y) dy.
\]

Next, we recall the concept of the Radon-Nikodym derivative of measures. Let $\mu_1$ and $\mu_2$ be nonnegative Radon measures such that $\mu_2 << \mu_1$. Then there exists a function $D_{\mu_1}\mu_2(x) \in L^\infty(\mu_1)$ called a Radon-Nikodym derivative of $\mu_2$ with respect to $\mu_1$ such that
\begin{equation}
\mu_2(A) = \int_{A} D_{\mu_1}\mu_2(x) d\mu_1(x).
\end{equation}
Moreover one can characterize the Radon-Nikodym derivative as follows (see e.g. \cite{Evans})
\begin{equation}
D_{\mu_1}\mu_2(x) = \lim_{\varepsilon \rightarrow 0^+}\frac{\mu_2(B(x,\varepsilon))}{\mu_1(B(x,\varepsilon))}
\end{equation}
for $\mu_1-$a.e. $x$. Here $B(x,r)$ denotes as usual the ball with center $x$ and radius $r$.

The definition of the Radon-Nikodym derivative can be extended using the Hahn-Jordan theorem to signed measures and then componentwise to vector valued signed measures.

First, let us define the continuous extension of a characteristic function of the ball of radius $\ep > 0$ as follows:
Let $\kappa_\ep : \R^+ \rightarrow \R^+$ be defined as follows
\begin{align}
\kappa_\ep(x) &= 1 \qquad \text{ for } x \in (0,\ep) \\
\kappa_\ep(x) &= 2 - \frac{x}{\ep} \qquad \text{ for } x \in (\ep,2\ep) \\
\kappa_\ep(x) &= 0 \qquad \text{ for } x \in (2\ep,+\infty).
\end{align}
Fix $x \in Y$ and define $\chi_{x,\ep} : Y \rightarrow \R^+$ as $\chi_{x,\ep}(y) := \kappa_\ep(\abs{y-x})$.

\begin{proposition}\label{p:RNder}
Let $\mu_1$ and $\mu_2$ be nonnegative Radon measures such that $\mu_2 << \mu_1$. Then it holds
\begin{equation}\label{RNder}
D_{\mu_1}\mu_2(x) = \lim_{\ep \rightarrow 0^+}\frac{\langle \mu_2,\chi_{x,\ep}\rangle }{\langle \mu_1,\chi_{x,\ep}\rangle }.
\end{equation}
\end{proposition}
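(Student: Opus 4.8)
The plan is to reduce the claim to the ball-ratio characterization of the Radon--Nikodym derivative recalled just above, namely that for $\mu_1$-a.e.\ $x$ one has $D_{\mu_1}\mu_2(x)=\lim_{\ep\to0^+}\mu_2(B(x,\ep))/\mu_1(B(x,\ep))$. The bridge between that statement and \eqref{RNder} is a Cavalieri (layer-cake) representation of the pairing $\langle\mu,\chi_{x,\ep}\rangle$ that rewrites it as an average of the ball masses $r\mapsto\mu(B(x,r))$ over the radii $r\in(\ep,2\ep)$. Once both numerator and denominator are expressed through the same family of balls, the convergence of the quotient will follow from the convergence of the ball ratios alone.

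First I would record the super-level sets of the tent profile. Since $0\le\kappa_\ep\le1$ and, for $s\in(0,1)$, the inequality $\kappa_\ep(t)>s$ holds exactly when $t<\ep(2-s)$, the super-level sets of $\chi_{x,\ep}$ are balls: $\{y:\chi_{x,\ep}(y)>s\}=B(x,\ep(2-s))$ for $s\in(0,1)$. Applying Tonelli's theorem (the layer-cake formula) to any nonnegative Radon measure $\mu$ then gives
\[
\langle\mu,\chi_{x,\ep}\rangle=\int_0^1\mu\bigl(B(x,\ep(2-s))\bigr)\,ds=\frac1\ep\int_\ep^{2\ep}\mu(B(x,r))\,dr,
\]
where the last equality is the substitution $r=\ep(2-s)$. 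Consequently the quotient in \eqref{RNder} equals
\[
\frac{\langle\mu_2,\chi_{x,\ep}\rangle}{\langle\mu_1,\chi_{x,\ep}\rangle}=\frac{\int_\ep^{2\ep}\mu_2(B(x,r))\,dr}{\int_\ep^{2\ep}\mu_1(B(x,r))\,dr}.
\]

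Next I would run a squeeze on this ratio of integrals. Fix a point $x$ at which the ball characterization holds and at which $L:=D_{\mu_1}\mu_2(x)$ is finite and $x\in\supp\mu_1$; these conditions hold for $\mu_1$-a.e.\ $x$. Given $\delta>0$, choose $r_0>0$ so that $(L-\delta)\mu_1(B(x,r))\le\mu_2(B(x,r))\le(L+\delta)\mu_1(B(x,r))$ for all $r<r_0$. Then for every $\ep<r_0/2$ the whole integration range $(\ep,2\ep)$ lies in $(0,r_0)$, so integrating these inequalities in $r$ and dividing by the strictly positive denominator $\int_\ep^{2\ep}\mu_1(B(x,r))\,dr$ yields $L-\delta\le\langle\mu_2,\chi_{x,\ep}\rangle/\langle\mu_1,\chi_{x,\ep}\rangle\le L+\delta$. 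Letting $\ep\to0^+$ and then $\delta\to0^+$ gives \eqref{RNder}.

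The one point that requires care --- and the reason for routing through the layer-cake identity rather than the obvious bound $\ONE_{B(x,\ep)}\le\chi_{x,\ep}\le\ONE_{B(x,2\ep)}$ --- is that the naive sandwich produces the quotient $\mu_2(B(x,2\ep))/\mu_1(B(x,\ep))$, whose control would demand a doubling property $\mu_1(B(x,2\ep))\lesssim\mu_1(B(x,\ep))$ that a general Radon measure need not possess. The integral representation sidesteps this entirely, since numerator and denominator are averaged over the \emph{same} radii and hence only the pointwise convergence of $\mu_2(B(x,r))/\mu_1(B(x,r))$ is used. The remaining provisos are purely measure-theoretic: positivity of $\mu_1(B(x,r))$ for small $r$ holds off the $\mu_1$-null complement of $\supp\mu_1$, and finiteness of $L$ holds $\mu_1$-a.e.\ because $D_{\mu_1}\mu_2\in L^\infty(\mu_1)$.
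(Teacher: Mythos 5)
Your proof is correct and follows essentially the same route as the paper: both rewrite the pairing as the radial average $\langle\mu,\chi_{x,\ep}\rangle=\frac1\ep\int_\ep^{2\ep}\mu(B(x,r))\,dr$ (you via the layer-cake formula, the paper via Fubini applied to $\chi_{x,\ep}=\frac1\ep\int_\ep^{2\ep}\chi_{B(x,s)}\,ds$) and then pass from the a.e.\ ball-ratio characterization of $D_{\mu_1}\mu_2$ to the ratio of these averages by the same squeeze argument, which the paper isolates as Lemma~\ref{l:ab}. Your explicit handling of the $\mu_1$-a.e.\ provisos (positivity of $\mu_1(B(x,r))$, finiteness of the derivative) is a welcome extra bit of care but not a different method.
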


In order to prove Proposition \ref{p:RNder} we need two elementary observations. Firstly it is a matter of a simple computation to check that
\[
\chi_{x,\ep}(y) = \frac{1}{\ep}\int_\ep^{2\ep} \chi_{B(x,s)}(y)ds.
\]
Secondly we need the following lemma.
\begin{lemma}\label{l:ab}
Let $a,b$ be nonnegative functions and let
\[
\lim_{\ep\rightarrow 0^+} \frac{a(\ep)}{b(\ep)} = M.
\]
Then
\begin{equation}
\lim_{\ep\rightarrow 0^+} \frac{\int_\ep^{2\ep} a(s)ds}{\int_\ep^{2\ep} b(s)ds} = M.
\end{equation}
\end{lemma}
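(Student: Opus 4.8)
The plan is to turn the hypothesis on the pointwise ratio into a two-sided pointwise bound and then transfer that bound to the integrals by monotonicity of the integral, concluding with a squeeze. Fix $\delta > 0$. By the definition of $\lim_{\ep \to 0^+} a(\ep)/b(\ep) = M$ there is an $\ep_0 > 0$ such that $b(s) > 0$ and
\[
(M - \delta)\, b(s) \le a(s) \le (M + \delta)\, b(s)
\]
for every $s \in (0,\ep_0)$; here I use the nonnegativity of $b$ to multiply the estimate $\absb{a(s)/b(s) - M} < \delta$ through by $b(s)$ without reversing the inequalities. In the intended application of Proposition~\ref{p:RNder} one has $a(\ep) = \mu_2(B(x,\ep))$ and $b(\ep) = \mu_1(B(x,\ep))$, which are nondecreasing in $\ep$ and hence locally integrable, and $b(s) > 0$ for small $s$ at any point where the derivative is under consideration, so all integrals below are finite and the denominator is strictly positive.

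Next I would integrate these inequalities over the interval $[\ep, 2\ep]$ for any $\ep$ with $2\ep < \ep_0$, that is $0 < \ep < \ep_0/2$. Since $\int_\ep^{2\ep} b(s)\,ds > 0$, dividing through gives
\[
M - \delta \le \frac{\int_\ep^{2\ep} a(s)\,ds}{\int_\ep^{2\ep} b(s)\,ds} \le M + \delta
\]
for all such $\ep$. As $\delta > 0$ was arbitrary, letting $\delta \to 0^+$ (equivalently, reading off that the quotient stays within $\delta$ of $M$ for all sufficiently small $\ep$) yields $\lim_{\ep \to 0^+} \bigl(\int_\ep^{2\ep} a\bigr)/\bigl(\int_\ep^{2\ep} b\bigr) = M$, which is the claim.

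The only genuine subtlety — hardly an obstacle — is ensuring that the denominators $b(s)$ and $\int_\ep^{2\ep} b(s)\,ds$ do not vanish, so that both the hypothesis and the conclusion are meaningful; this is precisely where the nonnegativity of $a,b$ and the structure of the measures $\mu_1,\mu_2$ in Proposition~\ref{p:RNder} are used. Beyond that, the argument is the elementary squeeze above and requires no regularity of $a$ and $b$ past local integrability.
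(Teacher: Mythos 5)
Your proof is correct and follows essentially the same route as the paper's: both convert the hypothesis into the two-sided pointwise bound $(M-\delta)b(s)\le a(s)\le (M+\delta)b(s)$ for all small $s$, integrate over $[\ep,2\ep]$, and conclude by a squeeze. Your additional remarks on the positivity and integrability of the denominators in the intended application are a harmless (and slightly more careful) elaboration of what the paper leaves implicit.
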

\begin{proof}
Directly from the assumption of the lemma we have
\[
\lim_{\ep\rightarrow 0^+} \sup_{s \in (\ep,2\ep)} \frac{a(s)}{b(s)} = M.
\]
This yields that for every $\delta > 0$ there exists $\ep_\delta > 0$ such that for all $\ep < \ep_\delta$ and all $s \in (\ep,2\ep)$ it holds
\[
(M-\delta)b(s) \leq a(s) \leq (M+\delta)b(s).
\]
Integrating this inequality we immediately get
\[
(M-\delta)\int_\ep^{2\ep}b(s) ds \leq \int_\ep^{2\ep}a(s) ds \leq (M+\delta)\int_\ep^{2\ep}b(s) ds,
\]
which concludes the proof.
\end{proof}
Using Lemma \ref{l:ab} we now prove Proposition \ref{p:RNder}. We have
\begin{align}
D_{\mu_1}\mu_2(x) &= \lim_{\ep \rightarrow 0^+} \frac{\langle \mu_2,\chi_{B(x,\ep)}\rangle }{\langle \mu_1,\chi_{B(x,\ep)}\rangle } \\ \nonumber
&= \lim_{\ep \rightarrow 0^+} \frac{\int_\ep^{2\ep}\langle \mu_2,\chi_{B(x,s)}\rangle ds}{\int_\ep^{2\ep}\langle \mu_1,\chi_{B(x,s)}\rangle ds} \\ \nonumber
&= \lim_{\ep \rightarrow 0^+} \frac{\int_\ep^{2\ep} \int_Y \chi_{B(x,s)}(y)d\mu_2(y) ds}{\int_\ep^{2\ep} \int_Y \chi_{B(x,s)}(y)d\mu_1(y) ds} \\ \nonumber
&= \lim_{\ep \rightarrow 0^+} \frac{\int_Y \int_\ep^{2\ep} \chi_{B(x,s)}(y)ds d\mu_2(y)}{\int_Y \int_\ep^{2\ep} \chi_{B(x,s)}(y)ds d\mu_1(y)} \\ \nonumber
&= \lim_{\ep \rightarrow 0^+} \frac{\int_Y \chi_{x,\ep}(y) d\mu_2(y)}{\int_Y \chi_{x,\ep}(y) d\mu_1(y)}.
\end{align}
The following proposition is a generalization of \cite[Lemma 2.1]{FGSW16}, however the proof follows differently, without using the connection between biting limit and  Young measures. 
\begin{proposition}\label{p:gener}
Let $f(y,u)$ be a nonnegative continuous function on $Y \times \overline X$ and let $g(y,u)$ be a vector-valued function, also   continuous  on $Y \times \overline X$ such that
\begin{equation}\label{propass}
\lim\limits_{|u|\to\infty}\abs{g(y,u)} \leq C \lim\limits_{|u|\to\infty}f(y,u).
\end{equation}
Let $m_f$ and $m_g$ denote the concentration measures related to $f$ and $g$ respectively. Then
\begin{equation}\label{mgmf}
\abs{m_g} \leq Cm_f, 
\end{equation}
i.e. $\abs{m_g}(A) \leq Cm_f(A)$ for any Borel set $A \subset Y$.
\end{proposition}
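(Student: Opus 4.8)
The plan is to first establish the scalar functional inequality
\[
\langle m_{|g|},\chi\rangle \le C\,\langle m_f,\chi\rangle
\]
for every nonnegative $\chi\in C_c(Y)$, where $m_{|g|}$ denotes the concentration measure of $|g|$, and then to upgrade this to the measure inequality $|m_g|\le Cm_f$ by means of Proposition~\ref{p:RNder} together with the componentwise $l^\infty$ reduction already recorded above, which gives $|\langle m_g,\chi\rangle|\le\langle m_{|g|},\chi\rangle$.

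The reason the merely asymptotic hypothesis~\eqref{propass} suffices is that the concentration measures detect only the regime $|u^n|\to\infty$. Fix $\chi\in C_c(Y)$ with compact support $K:=\supp\chi$. Since $f$ and $g$ are continuous on $Y\times\overline{X}$, they are bounded on the compact set $K\times(\overline{X}\cap\{|u|\le R\})$, say by $M_R$ and $M'_R$ respectively. Hence for $k>M_R$ one has $\{y:f(y,u^n(y))\ge k\}\cap K\subset\{y:|u^n(y)|>R\}$, and analogously for $\{|g|\ge k\}$ when $k>M'_R$; thus in the double limit defining $m_f$ and $m_{|g|}$ only points at which $u^n$ escapes to infinity contribute.

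With this in hand I would fix $\varepsilon_0>0$ and use~\eqref{propass} to choose $R=R_{\varepsilon_0}$ so that $|g(y,u)|\le(C+\varepsilon_0)f(y,u)$ whenever $|u|\ge R$. For $k>M'_R$ the set $\{|g|\ge k\}\cap K$ lies in $\{|u^n|>R\}$, where $f\ge|g|/(C+\varepsilon_0)\ge k/(C+\varepsilon_0)$, so it is also contained in $\{f\ge k/(C+\varepsilon_0)\}$; consequently
\[
\int_{\{|g|\ge k\}}|g(y,u^n)|\chi\,dy\le(C+\varepsilon_0)\int_{\{f\ge k/(C+\varepsilon_0)\}}f(y,u^n)\chi\,dy .
\]
Passing to the limit first in $n$ and then in $k$, and noting that $k/(C+\varepsilon_0)\to\infty$, yields $\langle m_{|g|},\chi\rangle\le(C+\varepsilon_0)\langle m_f,\chi\rangle$; letting $\varepsilon_0\to0$ gives the desired scalar inequality.

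Finally, the scalar inequality shows $m_{|g|}\ll m_f$, so Proposition~\ref{p:RNder} yields, for $m_f$-a.e.\ $x$,
\[
D_{m_f}m_{|g|}(x)=\lim_{\ep\to0^+}\frac{\langle m_{|g|},\chi_{x,\ep}\rangle}{\langle m_f,\chi_{x,\ep}\rangle}\le C,
\]
since every quotient on the right is bounded by $C$; integrating $D_{m_f}m_{|g|}$ against $m_f$ then gives $m_{|g|}(A)\le Cm_f(A)$ for all Borel $A$, and the componentwise bound $|\langle m_g,\chi\rangle|\le\langle m_{|g|},\chi\rangle$ promotes this to $|m_g|\le Cm_f$. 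I expect the main obstacle to be the uniformity in $y$ of~\eqref{propass}: as stated the hypothesis is pointwise in $y$, so obtaining $|g(y,u)|\le(C+\varepsilon_0)f(y,u)$ simultaneously for all $y\in K$ once $|u|\ge R_{\varepsilon_0}$ requires either that $f,g$ be independent of $y$ (as in every application in Section~\ref{App}) or a compactness argument over $K$; a secondary nuisance is the careful bookkeeping of the nested limits in $n$ and $k$ so that the set inclusions above hold for all large $k$.
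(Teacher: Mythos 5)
Your proposal is correct and follows essentially the same route as the paper: both arguments reduce to the componentwise bound $|\langle m_g,\chi\rangle|\le\langle m_{|g|},\chi\rangle$, compare the defining superlevel-set integrals of $|g|$ and $f$, and conclude via Proposition~\ref{p:RNder} that the Radon--Nikodym derivative $D_{m_f}|m_g|$ is bounded by $C$. Your treatment is in fact more careful than the paper's at the one delicate point (the paper simply asserts the quotient is $\le C$ "as a consequence of \eqref{propass}"): you explain why the merely asymptotic hypothesis suffices, insert the $\varepsilon_0$ slack with the containment $\{|g|\ge k\}\subset\{f\ge k/(C+\varepsilon_0)\}$, and correctly flag the uniformity-in-$y$ issue, which is harmless in all the applications since there $f$ and $g$ do not depend on $y$.
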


\begin{proof}
First we observe that $\abs{m_g} << m_f$ as a consequence of \eqref{propass}. Then for any Borel set $A \subset Y$ we have
\[
\abs{m_g}(A) = \int_A D_{m_f} \abs{m_g} dm_f \leq \|D_{m_f} \abs{m_g} \|_{L^\infty_{m_f}(A)}m_f(A)  \leq \|D_{m_f} \abs{m_g} \|_{L^\infty_{m_f}(Y)}m_f(A).
\]
However we also have
\begin{align}
D_{m_f} \abs{m_g}(x) &= \lim_{\ep \rightarrow 0^+} \frac{\langle \abs{m_g},\chi_{x,\ep}\rangle }{\langle m_f,\chi_{x,\ep}\rangle } \\ \nonumber
&\leq \lim_{\ep \rightarrow 0^+} \frac{\lim_{k\rightarrow+\infty}\lim_{n\rightarrow+\infty}\int_{Y\cap\{f \geq k\}}\abs{g(y,u^n(y))}\chi_{x,\ep}(y)dy}{\lim_{k\rightarrow+\infty}\lim_{n\rightarrow+\infty}\int_{Y\cap\{f \geq k\}}f(y,u^n(y))\chi_{x,\ep}(y)dy} \leq C.
\end{align}
\end{proof}

Finally we use Proposition \ref{p:gener} with $f = \eta$ and $g = A$ and $g = F_\alpha$ and then with $f = \eta(\cdot|U)$ and $g = F_\alpha(\cdot|U)$ to prove Proposition \ref{p:meas}.

\subsection{Generalized Young measures}\label{ss:GYM}

Let us recall here the result of \cite{AlBo} characterizing the weak limits of nonlinear functions applied to maps bounded in $L^p(\T^d)$. Suppose  $(u_n)_{n\in\mathbb{N}}$ is a sequence of maps bounded in $L^p(\T^d;\R^m)$. 
It was proved in~\cite{AlBo} that there exists a subsequence (not relabeled), a parametrized probability measure $\nu\in L_w^\infty(\T^d;\mathcal{P}(\R^m))$, a non-negative measure $m\in\mathcal{M}^+(\T^d)$, and a parametrized probability measure on a sphere $\nu^\infty\in L_w^\infty(\T^d,m;\mathcal{P}(\mathbb{S}^{m-1}))$ such that
\begin{equation}\label{GYM}
f(x,u_n(x))dx\stackrel{*}{\rightharpoonup}\int_{\R^m}f(x,\lambda)d\nu_x(\lambda)dx+\int_{\mathbb{S}^{m-1}}f^\infty_r(x,\beta)d\nu^\infty_x(\beta)m
\end{equation}
weakly-star in the sense of measures. Here, $f:\T^d\times\R^m\to\R$ is any Carath\'{e}odory function with well defined and continuous recession function $f^\infty_r: \T^d \times \mathbb{S}^{m-1} \rightarrow \R$ defined as
\begin{equation}\label{recession}
f^\infty_r(x,\beta):=\lim_{
s\rightarrow\infty}
\frac{f(x,s\beta')}{s^p}.
\end{equation} 

Note that the measure $\nu$ represents the classical Young measure describing the oscillations in the sequence, whereas the second term on the right hand side of \eqref{GYM} describes the concentrations.

We can easily observe that this framework does not apply e.g. in the case of isentropic compressible Euler system with a pressure given by $p(\rho)=\rho^\gamma$, with $\gamma\neq 2$.
Choosing the variables $\beta=(\beta_1,\beta_2)=(\rho,\sqrt{\rho}v)$ the flux function has a form
$f(\beta)=(\sqrt\beta_1\beta_2,\beta_1^\gamma+\beta_2\otimes\beta_2)$. Consider an approximate sequence
$u^n=(\rho^n, \sqrt{\rho^n}v^n)$. An entropy inequality provides a~priori bounds
\begin{equation}
\int_\Omega\left(\frac{1}{2}|\sqrt{\rho^n}v^n|^2+\frac{1}{\gamma-1}(\rho^n)^\gamma\right) dx \le c.
\end{equation}
Thus we cannot conclude there exists some $p$ that the sequence $u^n$ is uniformly bounded in $L^p$. Here the first component is bounded in $L^\gamma$ and the second one in $L^2$. In a consequence there is no possibility to define a recession function according to formula~\eqref{recession}. 

This example motivates us to claim 
that in many cases  the framework of Alibert and Bouchitt\'{e} needs a 
refinement  to allow for considering  sequences with components of different growth. Following~\cite{GSW2015} let us take a sequence  $u^n=(v^n,w^n)_k$ with $(v^n)$  bounded in $L^p(\Omega;\R^l)$ and $(w^n)$ bounded in $L^q(\Omega;\R^m)$ ($1\leq p,q<\infty$). Then we define the \emph{nonhomogeneous unit sphere} as follows
\begin{equation*}
\mathbb{S}^{l+m-1}_{p,q}:=\{(\beta_1,\beta_2)\in\R^{l+m}: |\beta_1|^{2p}+|\beta_2|^{2q}=1\}.
\end{equation*}

We can characterize the limit as in~\eqref{GYM} and this is valid for all integrands $f$ whose $p$-$q$-recession function exists and is continuous on $\bar{\Omega}\times\mathbb{S}^{l+m-1}_{p,q}$. The $p$-$q$-recession function is defined as
\begin{equation*}
f^\infty(x,\beta_1,\beta_2):=\lim_{s\rightarrow\infty}\frac{f(x',s^q\beta_1',s^p\beta_2')}{s^{pq}}.
\end{equation*} 
Such an approach however is one of possible frameworks. We could consider more general compactifications of $\R^n$ than compactification with a sphere.

Since  
\[
\lim_{s\rightarrow+\infty}\eta(su) = +\infty
\]
for all $u \in \mathbb{S}^{n-1} \cap X$ we would like to define modified recession function as follows.
Let $f(u): X \rightarrow \R$ be a smooth function and let $\eta(u)$ be an entropy related to  hyperbolic system \eqref{system}. Then the modified recession function $f^\infty(u) : \mathbb{S}^{n-1} \cap X \rightarrow \R$ reads as
\begin{equation}\label{recess}
f^\infty(u) = \lim_{s \rightarrow +\infty} \frac{f(su)}{\eta(su)}
\end{equation}
for any $u \in \mathbb{S}^{n-1} \cap X$.

However, again here such  defined  function may not necessarily be continuous. Thus the correct definition should rather have the form
\begin{equation}\label{finfty}
f^\infty(u)=\lim\limits_{s\to\infty}\frac{f(s^{\alpha_1}\beta_1, \ldots, s^{\alpha_n}\beta_n)}{\eta(s^{\alpha_1}\beta_1, \ldots, s^{\alpha_n}\beta_n)}
\end{equation}
with some $\alpha_i>0$, $i=1,\ldots,n$. 

Assuming that the modified recession functions $A^\infty$ and $F_\alpha^\infty$ are properly defined (according to the definition from \eqref{finfty}) it is easy to observe that the properties \eqref{H41}-\eqref{H42} from hypothesis (H4) transfer to
\begin{align}
|A^\infty(u)| &\leq C \label{H41rec} \\
|F_\alpha^\infty(u)| &\leq C, \qquad \alpha = 1,...,d. \label{H42rec}
\end{align}
Given a strong solution $U$ to  system \eqref{system} with values in a compact subset of $X$ we can also calculate the modified recession functions for the relative quantities $\eta(u|U)$ and $F_\alpha(u|U)$. We have
\begin{equation}\label{RErec}
\eta^\infty(u|U) = \lim_{s\rightarrow+\infty}\frac{\eta((s^{\alpha_1}u_1, \ldots, s^{\alpha_n}u_n)|U)}{\eta(s^{\alpha_1}u_1, \ldots, s^{\alpha_n}u_n)} = 1 - G(U)\cdot A^\infty(u)
\end{equation}
and
\begin{equation}\label{RFrec}
F_\alpha^\infty(u|U) = \lim_{s\rightarrow+\infty}\frac{F_\alpha((s^{\alpha_1}u_1, \ldots, s^{\alpha_n}u_n)|U)}{\eta(s^{\alpha_1}u_1, \ldots, s^{\alpha_n}u_n)}= F_\alpha^\infty(u) - \nabla F_\alpha(U)\nabla A(U)^{-1} A^\infty(u).
\end{equation}

Note that since both $\eta(u|U)$ and $\eta(u)$ are nonnegative, also the modified recession function $\eta^\infty(u|U)$ has the same property, i.e.
\begin{equation}\label{etainf}
\eta^\infty(u|U) = 1 - G(U)\cdot A^\infty(u) \geq 0
\end{equation}
for all $u \in S^{n-1} \cap X$. Moreover the upper bound for the relative flux \eqref{bound} is also transfered to the modified recession functions as
\begin{equation}\label{recbound}
\begin{split}
|F_\alpha^\infty(u|U)| &=  \lim_{s\rightarrow+\infty}\frac{|F_\alpha((s^{\alpha_1}u_1, \ldots, s^{\alpha_n}u_n)|U)|}{\eta(s^{\alpha_1}u_1, \ldots, s^{\alpha_n}u_n)} \\&
\leq C\lim_{s\rightarrow+\infty}\frac{\eta((s^{\alpha_1}u_1, \ldots, s^{\alpha_n}u_n)|U)}{\eta(s^{\alpha_1}u_1, \ldots, s^{\alpha_n}u_n)}= C\eta^\infty(u|U). 
\end{split}
\end{equation}

Now we use these bounds for the recession functions to derive the bounds for the concentration measures described in Proposition \ref{p:meas}. We have

\begin{align}
\eta(u^n|U)dxdt&\stackrel{*}{\rightharpoonup}\left<\nu_{t,x},\eta(\lambda|U)\right>dxdt+\int_{\mathbb{S}^{m-1}}\eta^\infty(\beta|U)d\nu^\infty_{t,x}(\beta)m_\eta \\ \nonumber
 &= \left<\nu_{t,x},\eta(\lambda|U)\right>dxdt+m_\eta - G(U)\cdot\int_{\mathbb{S}^{m-1}}A^\infty(\beta)d\nu^\infty_{t,x}(\beta)m_\eta \\ \nonumber
&= \left<\nu_{t,x},\eta(\lambda|U)\right>dxdt+m_\eta - G(U)\cdot m_A .
\end{align}

This proves the form of the concentration measure for $\eta(u|U)$ and also \eqref{positive1}, since $\eta^\infty(\beta|U)$ is a nonnegative function, $\nu^\infty_{t,x}$ is a probability measure (i.e. nonnegative) and $m_\eta$ is a nonnegative measure.

Our final aim is to prove \eqref{positive2}. We start with
\begin{align}
F_\alpha(u^n|U)dxdt&\stackrel{*}{\rightharpoonup}\left<\nu_{t,x},F_\alpha(\lambda|U)\right>dxdt+\int_{\mathbb{S}^{m-1}}F_\alpha^\infty(\beta|U)d\nu^\infty_{t,x}(\beta)m_\eta \\ \nonumber
 &= \left<\nu_{t,x},F_\alpha(\lambda|U)\right>dxdt \\ \nonumber
 & \qquad +\int_{\mathbb{S}^{m-1}}F_\alpha^\infty(\beta) - \nabla F_\alpha(U)\nabla A(U)^{-1} A^\infty(\beta) d\nu^\infty_{t,x}(\beta)m_\eta \\ \nonumber
&= \left<\nu_{t,x},F_\alpha(\lambda|U)\right>dxdt+ m_{F_{\alpha}} - \nabla F_\alpha(U)\nabla A(U)^{-1} m_A,
\end{align}
which proves the form of the concentration measure for the relative fluxes $F_\alpha(u|U)$. Finally we use \eqref{recbound} to argue that
\begin{align}
&| m_{F_{\alpha}} - \nabla F_\alpha(U)\nabla A(U)^{-1} m_A| = \int_{\mathbb{S}^{m-1}}|F_\alpha^\infty(\beta|U)|d\nu^\infty_{t,x}(\beta)m_\eta \\ \nonumber 
& \qquad \leq C \int_{\mathbb{S}^{m-1}}\eta^\infty(\beta|U)d\nu^\infty_{t,x}(\beta)m_\eta = C (m_\eta - G(U)\cdot m_A).
\end{align}
The proof of Proposition \ref{p:meas} is complete.

\section{Relative entropy inequality}

\subsection{Derivation of the relative entropy inequality}

We derive the relative entropy inequality. We choose in \eqref{weak-form} a test function $\varphi=\zeta(t)G(U(t,x))$ with $\zeta \in C^\infty_c([0,T))$. As $U$ is a strong solution, thus \eqref{system}  is satisfied by $U$, we multiply it with the same test function and integrate, finally to subtract it from \eqref{weak-form} to get
\begin{equation}
\begin{split}
\int_Q\zeta'(t)G(U)\cdot(\langle \nu_{t,x}, A(\lambda)\rangle-A(U))dxdt +\int_Q\zeta'(t)G(U)\cdot m_A(dxdt)  
\\
+ \int_Q\zeta(t)\partial_\alpha G(U)\cdot(\langle \nu_{t,x}, F_\alpha(\lambda)\rangle-F_\alpha(U))dxdt+ \int_Q\zeta(t)\partial_\alpha G(U)\cdot m_{F_\alpha}(dxdt)  
\\
+\int_Q\zeta(t)\partial_tG(U)\cdot(\langle \nu_{t,x}, A(\lambda)\rangle-A(U))dxdt +\int_Q\zeta(t)\partial_tG(U)\cdot m_A(dxdt) 
\\
+\int_{\T^d}\zeta(0)G(U(0))\cdot(\langle \nu_{0,x}, A(\lambda)\rangle-A(U(0)))dx+\int_{\T^d}\zeta(0)G(U(0))\cdot m_A^0(dx) =0.
\end{split}
\end{equation}
Following \cite{tzavaras1} we define the averaged quantities
\begin{equation}\label{dfH}
\mathcal{H}({\nu},U)\doteq\langle \boldsymbol{\nu},\eta\rangle-\eta(U)-G(U)\cdot \big ( \langle\boldsymbol{\nu},A\rangle-A(U) \big ),
\end{equation}
\begin{equation}
\label{dfZ}
Z_\alpha({\nu},U)\doteq\langle \boldsymbol{\nu},F_\alpha\rangle-F_\alpha(U)-\nabla F_\alpha(U)\nabla A(U)^{-1}(\langle\boldsymbol{\nu},A\rangle-A(U))\;.
\end{equation}
Since $\partial_tU=-(\nabla A(U))^{-1}\nabla F_\alpha(U)\partial_\alpha U$ we observe the following 
\begin{equation*}
\begin{split}
\partial_tG(U)\cdot &(\langle \nu_{t,x}, A(\lambda)\rangle-A(U))+ \partial_\alpha G(U)\cdot(\langle \nu_{t,x}, F_\alpha(\lambda)\rangle-F_\alpha(U))
\\
&=\nabla G(U)\partial_t U\cdot (\langle \nu_{t,x}, A(\lambda)\rangle-A(U))+\nabla G(U)\partial_\alpha U\cdot (\langle \nu_{t,x}, F_\alpha(\lambda)\rangle-F_\alpha(U))
\\
&=\nabla G(U)(\nabla A(U))^{-1}\nabla F_\alpha(U)\partial_\alpha U\cdot (\langle \nu_{t,x}, A(\lambda)\rangle-A(U))
\\
&+
\nabla G(U)\partial_\alpha U\cdot (\langle \nu_{t,x}, F_\alpha(\lambda)\rangle-F_\alpha(U)
\\
&=:\nabla G(U)\partial_\alpha U\cdot Z_\alpha(\nu_{t,x},U).
\end{split}
\end{equation*}

Using the entropy inequality for measure-valued solutions \eqref{energy}
we obtain
\begin{equation}\label{estimates20}
\begin{split}
\int_Q\zeta'(t)& \mathcal{H}(\nu,U)+\int_Q\zeta'(t)(m_\eta-m_A\cdot G(U))(dxdt)
\\
&\ge\int_Q \zeta(t)\nabla G(U) \partial_\alpha U\cdot Z_\alpha(\nu, U) \\&+
\int_Q\zeta(t)(m_A\cdot\nabla G(U)\partial_tU+m_{F_{\alpha}}\cdot\nabla G(U)\partial_\alpha U)(dxdt)
\\
&-\int_{\T^d}\zeta(0)(\langle\nu_{0,x},\eta\rangle-\eta(U(0))-(\langle\nu_{0,x}, A\rangle-A(U(0)))\cdot G(U(0)) dx
\\
&-\int_{\T^d}\zeta(0)(m_\eta^0-m_A^0\cdot G(U(0))) (dx).
\end{split}
\end{equation}
Using \eqref{H23}-\eqref{H24} we compute
\begin{equation}
\begin{split}
 m_A\cdot&\nabla G(U)\partial_tU+m_{F_{\alpha}}\cdot\nabla G(U)\partial_\alpha U
\\
&=-m_A\cdot\nabla G(U)(\nabla A(U))^{-1}\nabla F_\alpha(U)\partial_\alpha U+m_{F_{\alpha}}\cdot\nabla G(U)\partial_\alpha  U
\\
&=-m_A\cdot \nabla A^{-T}\nabla G^T\nabla F_\alpha\partial_\alpha U+m_{F_{\alpha}}\cdot\nabla G(U)\partial_\alpha U
\\
&=-m_A\cdot\nabla A^{-T}\nabla F_\alpha\nabla G(U)\partial_\alpha U+m_{F_{\alpha}}\cdot \nabla G\partial_{\alpha} U
\\
&=(-\nabla F_\alpha(U)\nabla A(U)^{-1}m_A+m_{F_{\alpha}})\cdot \nabla G(U)\partial_\alpha U.
\end{split}
\end{equation}
In a standard way we choose $\zeta=\zeta^n$ to be a sequence of smooth monotone functions which approximate the characteristic function of the interval $[0,\tau]$ and pass to the limit, thus \eqref{estimates20} turns into
\begin{equation}
\begin{split}
\int_{\T^d}\mathcal {H} (\nu, U)(\tau)dx &+\int_{\T^d}(m_\eta^\tau-m_A^\tau\cdot G(U(\tau))) (dx)
\le C(U)\int _0^\tau\int_{\T^d}\max_\alpha |Z_\alpha| dxdt\\
&+\int _0^\tau\int_{\T^d}(-\nabla F_\alpha(U)(\nabla A(U))^{-1}m_A+m_{F_{\alpha}})\cdot \nabla G(U)\partial_\alpha U dxdt
\\
&+\int_{\T^d}\mathcal {H} (\nu, U)(0)dx +\int_{\T^d}(m_\eta^0-m_A^0\cdot G(U(0))) (dx).
\end{split}
\end{equation}
Note that in a same way as in \cite{tzavaras1} we have
\begin{equation}
\begin{aligned}
Z_\alpha(\nu,U)  &= 
 \langle {\nu},F_\alpha\rangle-F_\alpha(U)-\nabla F_\alpha(U)\nabla A(U)^{-1}(\langle{\nu},A\rangle-A(U))
 \\
 &= \langle {\nu} , F_\alpha (\lambda | U ) \rangle
 \\
 &\le  C_1 \langle {\nu} , \eta (\lambda | U ) \rangle 
 \\
 &= C_1 \mathcal{H} ( {\nu}, U).
\end{aligned}
\end{equation}
Consequently, using \eqref{positive2} the estimate which allows us to use the Gronwall inequality has a form
\begin{equation}
\begin{split}
\int_{\T^d}\mathcal {H}(\nu, U)(\tau) dx +\int_{\T^d}(m_\eta^\tau-m_A^\tau\cdot G(U(\tau)))(dx)\le
\\
C\left(\int_0^\tau\int_{\T^d}\mathcal {H}(\nu,U) dxdt + \int_0^\tau\left(\int_{\T^d} (m_\eta^t-m_A^t\cdot G(U(t)))(dx)\right)dt \right)\\
+ \int_{\T^d} \langle\nu_{0,x},\eta(\lambda|U(0))\rangle dx + \int_{\T^d}(m_\eta^0-m_A^0\cdot G(U(0)))(dx).
 \end{split}
\end{equation}
Thus, 
\begin{align}
&\int_{\T^d}\mathcal {H}(\nu, U)(t) dx + \int_{\T^d}(m_\eta^t-m_A^t\cdot G(U(t)))(dx) \\\nonumber
& \qquad \le C\left(\int_{\T^d} \langle\nu_{0,x},\eta(\lambda|U(0))\rangle dx + \int_{\T^d}(m_\eta^0-m_A^0\cdot G(U(0)))(dx)\right)\ e^{ct}.
\end{align}
In particular, we observe that if $\nu_{0,x} = \delta_{U(0,x)}$ and $m_\eta^0 = m_A^0 = 0$, then $\nu_{t,x} = \delta_{U(t,x)}$ a.e. and 
\begin{equation}\label{eq:blbl}
m_\eta^t - G(U(t))\cdot m_A^t = 0
\end{equation}
for almost any $t$. Note that at this point it is enough to have $m_\eta^0-m_A^0\cdot G(U(0)) = 0$ to reach the same conclusion. What remains now is to show that the concentration measures $m_\eta$, $m_A$ and $m_{F_\alpha}$ are all equal to zero. This is done by comparing the definition of the measure valued solution with $\nu_{t,x} = \delta_{U(t,x)}$ which we already know with the fact that $U(t,x)$ is a (strong) solution to the system \eqref{system}. Using here also the fact that $m_A^0 = 0$ we obtain
\begin{equation}
\int_0^T\int_{\T^d} \partial_t\varphi\cdot m_A(dxdt) + \int_0^T \int_{\T^d}\partial_\alpha \varphi\cdot m_{F_\alpha}(dxdt) = 0
\end{equation}
for all $\varphi \in C^\infty_c(Q)$. This yields $m_A = 0$ and $m_{F_\alpha} = 0$ and thus consequently $m_\eta = 0$ due to~\eqref{eq:blbl}.

\section{Extension}\label{Extension}

As one may easily observe unfortunately this general framework will not cover systems of conservation laws, which may fail to be hyperbolic, typically incompressible inviscid systems. 
In the current approach we present a simple extension of the presented framework to cover the case of incompressible fluids, in case of which the assumption that $\nabla A$ is a nonsingular matrix is not satisfied. For this reason we distinguish from the flux the part $L$ (Lagrange multiplier) which is perpendicular to the vector $G(U)$ (which coincides with the gradient of the entropy of the strong solution in the case $A = \text{Id}$). Thus we assume that there exists a subspace $Y$, such that $G(U)\in Y$  and $L\in Y^\bot$, where $U$ is a strong solution to the considered system.
Let us then consider a system in the following form
\begin{equation}\label{system2}
\partial_t A(u)+\partial_\alpha F_\alpha(u)+L=0.
\end{equation}
The fact that $G(U)\in Y$ can help a lot in constructing entropies for the system \eqref{system2}. Therefore we reformulate the hypothesis (H2) as follows.
\begin{itemize}
\item[(H2')] The system \eqref{system2} is endowed with a companion law
\begin{equation}\label{entropy2}
\partial_t \eta(u) + \partial_\alpha q_\alpha(u) + \overline{L}_\alpha\cdot\partial_\alpha u = 0
\end{equation}
with an entropy $\eta: \overline X\to \R_+$, 
such that $\eta(u) \geq 0$ 
and 
\begin{equation}\label{eta-infty2}
\lim\limits_{|u|\to\infty}\eta(u)=\infty.
\end{equation}
This means we assume the existence of a smooth function $G: X \rightarrow \R^n$ such that 
\begin{equation}
\nabla \eta = G \cdot \nabla A \label{H211}
\end{equation}
and the condition \eqref{H22} is relaxed the following way. We assume that
\begin{equation}
G \cdot \nabla F_\alpha = \nabla q_\alpha + \overline{L}_\alpha, \qquad \alpha = 1,...,d. \label{H221}
\end{equation}
with the additional property that $\overline{L}_\alpha \cdot \partial_\alpha u = 0$ for all $u$ such that $G(u) \in Y$. 

\end{itemize}

We need a slight modification of the definition of measure valued solution, namely the class of test functions will change. 
\begin{definition}\label{def2}
 We say that  $(\nu, m_A, m_{F_\alpha}, m_\eta)$, $\alpha = 1,...,d$, is a dissipative measure-valued solution of system~\eqref{system2} with initial data $(\nu_{0,\cdot},m_A^0,m_\eta^0)$ if $\{ \nu_{t,x} \}_{(t,x) \in (0,T) \times \T^d }$, $\nu \in L^{\infty}_{\rm weak}\left( (0,T) \times \T^d; \mathcal{P} \left(\overline X \right) \right)$ is a parameterized measure 
and  together with concentration measures
$m_A\in (\mathcal{M}({[0,T]\times\T^d}))^n$, $m_{F_{\alpha}}\in (\mathcal{M}({[0,T]\times\T^d}))^{n\times n}$ satisfy 
\begin{equation}\label{weak-form2}
\begin{split}
\int_Q\langle \nu_{t,x}, A(\lambda)\rangle \cdot\partial_t \varphi dxdt +\int_Q \partial_t \varphi\cdot m_A(dxdt)+ \int_Q\langle \nu_{t,x}, F_\alpha(\lambda)\rangle\cdot\partial_\alpha \varphi dxdt\\
+\int_Q \partial_\alpha \varphi\cdot m_{F_{\alpha}}(dxdt)  
+\int_{\T^d}\langle \nu_{0,x}, A(\lambda)\rangle\cdot\varphi(0) dx+\int_{\T^d} \varphi(0)\cdot m_A^0(dx)=0
\end{split}
\end{equation}
for all $\varphi\in (C^\infty_c(Q)\cap Y)^n$. 
Moreover, the total entropy balance holds for all nonnegative $\zeta\in C^\infty_c([0,T))$ 
\begin{equation}\label{energy2}
\begin{split}
\int_Q \langle \nu_{t,x}, \eta(\lambda)\rangle \zeta'(t) dxdt +\int_Q \zeta'(t)m_\eta(dxdt)+\int_{\T^d}\langle \nu_{0,x}, \eta(\lambda)\rangle\zeta(0) dx 
\\
+\int_{\T^d}\zeta(0)m_\eta^0(dx)\ge0
\end{split}
\end{equation}
with a dissipation measure $m_\eta\in  \mathcal{M}^+({[0,T]\times\T^d})$. 

\end{definition}

Then an analogue result on mv-strong uniqueness in this case requires to add the constrain on strong solution, which allows to use the 
vector $G(U)$ as a test function in a distributional formulation. Thus accordingly we require that the strong solution is such that $G(U)$ belongs to the subspace $Y$. For this purpose we define a space $W^{1,\infty}_{Y}(Q)$ as the set of all elements $\psi$, which are in $W^{1,\infty}(Q)$ and $G(\psi) \in Y$. 

\begin{theorem}\label{t:main-1}
Assume that the hypothesis (H1)-(H5) hold with (H2) replaced by (H2'). Let $(\nu, m_A, m_{F_{\alpha}}, m_\eta)$, $\alpha = 1,...,d$, be a dissipative measure-valued solution to~\eqref{system2} generated by a sequence of approximate solutions. Let 
$U\in W^{1,\infty}_{Y}(Q)$ be a strong solution to~\eqref{system2} with the same initial data $u_0\in L^1(\R^d)$, thus 
$\nu_{0,x}=\delta_{u_0(x)}$, $m_A^0=m_\eta^0=0$. Then $\nu_{t,x}=\delta_{U(t,x)}$  a.e. in $Q$ and $m_A=m_{F_{\alpha}}=m_\eta=0$.
\end{theorem}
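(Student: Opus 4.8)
The plan is to rerun the relative-entropy argument that establishes Theorem~\ref{t:main}, changing only the two places where the multiplier $L$ and the modified companion law (H2') intervene, and to check that both contributions are annihilated by the splitting $G(U)\in Y$, $L\in Y^\bot$. I would first note that Proposition~\ref{p:meas} survives verbatim: its proof (through Proposition~\ref{p:gener}, or through the generalized Young measure representation) uses only (H1)--(H5) and the generating sequence, never the precise form of the balance law, so both \eqref{positive1} and \eqref{positive2} remain valid for a dissipative measure-valued solution of \eqref{system2}. I would also record the consistency of (H2'): for the strong solution $\nabla A(U)\partial_t U=-\nabla F_\alpha(U)\partial_\alpha U-L$, and since $G(U)\in Y$ one has $\overline{L}_\alpha(U)\cdot\partial_\alpha U=0$ and $G(U)\cdot L=0$, whence $U$ still obeys $\partial_t\eta(U)+\partial_\alpha q_\alpha(U)=0$.

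Next comes the central computation. I would insert $\varphi=\zeta(t)G(U(t,x))$ into \eqref{weak-form2}; this is admissible precisely because $U\in W^{1,\infty}_Y(Q)$ forces $G(U)\in Y$, and a mollification commuting with the linear constraint defining $Y$ produces the required $C^\infty_c(Q)\cap Y$ approximants. Testing \eqref{system2} against the same $\varphi$ and integrating, the contribution $\int_Q\zeta\,G(U)\cdot L$ drops by $G(U)\in Y\perp L\in Y^\bot$, so after subtraction one recovers exactly the opening identity of the proof of Theorem~\ref{t:main}. From there the steps are identical: introduce $\mathcal H(\nu,U)$ and $Z_\alpha(\nu,U)$ as in \eqref{dfH}--\eqref{dfZ}, assemble the concentration terms with \eqref{H23}--\eqref{H24}, use the entropy inequality \eqref{energy2}, bound $Z_\alpha$ by $C\mathcal H$ and the relative-flux concentration through \eqref{positive2}, and close with Gronwall to conclude $\nu_{t,x}=\delta_{U(t,x)}$ together with $m_\eta^t-G(U(t))\cdot m_A^t=0$. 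Putting $\nu=\delta_U$ back into \eqref{weak-form2} and again subtracting the ($Y$-tested) weak form of \eqref{system2} for $U$, so that the $L$-term disappears once more, gives $\int_Q\partial_t\varphi\cdot m_A+\int_Q\partial_\alpha\varphi\cdot m_{F_\alpha}=0$ for all $\varphi\in(C^\infty_c(Q)\cap Y)^n$; combined with $m_{F_\alpha}=\nabla F_\alpha(U)\nabla A(U)^{-1}m_A$ (which follows from \eqref{positive2} and $m_\eta-m_A\cdot G(U)=0$) this should force $m_A=m_{F_\alpha}=0$, and then $m_\eta=0$.

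The main obstacle is a single new algebraic residue, together with the fact that only $Y$-valued test functions are at our disposal. Substituting $\partial_t U=-\nabla A(U)^{-1}(\nabla F_\alpha(U)\partial_\alpha U+L)$ into the terms carrying $\partial_tG(U)$ produces, beyond the usual $Z_\alpha$-expression, the residue $-\int_Q\zeta\,\nabla G(U)\nabla A(U)^{-1}L\cdot\mu$, where $\mu:=(\langle\nu_{t,x},A\rangle-A(U))\,dxdt+m_A$ is the weak-$*$ limit of $A(u^n)-A(U)$ and I have used the symmetry of $\nabla G(U)\nabla A(U)^{-1}$ from \eqref{H23} to transfer the matrix onto $\mu$. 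The crux is to show $\nabla G(U)\nabla A(U)^{-1}\mu\in Y$, so that it annihilates $L\in Y^\bot$; specialised to $\nu=\delta_U$ this same membership says that $m_A$, suitably transported, is $Y$-valued, which is exactly what allows the $Y$-restricted test functions of the last step still to detect $m_A$ and force it to vanish. Both requirements are one and the same structural fact — that the constraint cutting out $Y$ is preserved along the generating sequence (for $A=\mathrm{Id}$, incompressible Euler and MHD, simply that the limit of $u^n-U$ stays divergence free because each $u^n$ is) — and it is the clean formulation and verification of this fact, rather than the now-routine relative-entropy estimate, where I expect the real work of the extension to lie.
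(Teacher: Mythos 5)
Your route is exactly the one the paper intends: Theorem~\ref{t:main-1} is stated without a written proof, and the implicit argument is precisely a rerun of the relative-entropy computation of Theorem~\ref{t:main} with $\varphi=\zeta G(U)$ admissible because $U\in W^{1,\infty}_Y(Q)$, with $G(U)\cdot L=0$ and $\overline{L}_\alpha(U)\cdot\partial_\alpha U=0$ killing the new terms coming from the strong solution, and with Proposition~\ref{p:meas} carried over unchanged. Your observation that $(\nabla G\nabla A^{-1})^T=\nabla G\nabla A^{-1}$ follows from \eqref{H23} is correct, and your identification of the residue $-\int_Q\zeta\,L\cdot\nabla G(U)\nabla A(U)^{-1}\bigl[(\langle\nu_{t,x},A\rangle-A(U))\,dxdt+m_A\bigr]$ is the genuinely new term of the extension; the paper never writes it down.

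The one thing you should be explicit about is that the vanishing of this residue, and likewise the sufficiency of $Y$-valued test functions to force $m_A=0$ in the last step, do \emph{not} follow from Definition~\ref{def2} together with Definition~\ref{gener} as literally stated. Definition~\ref{def2} only constrains the action of $\langle\nu_{t,x},A\rangle\,dxdt+m_A$ on derivatives of $Y$-valued test functions, and Definition~\ref{gener} does not require the generating sequence $u^n$ to respect the linear constraint cutting out $Y$. So the "structural fact" you defer to the end must either be added as a hypothesis (the approximate solutions satisfy the constraint, hence so does the weak-$*$ limit including its concentration part) or be built into the definition of a dissipative measure-valued solution of \eqref{system2}, as DiPerna--Majda do for incompressible Euler by demanding that the barycenter-plus-concentration be divergence free. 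With that addition your argument closes: in the examples of Section~\ref{ss:Extension} one has $\nabla G(U)\nabla A(U)^{-1}=\mathrm{Id}$ on the constrained components, the residue is $\langle \nabla p,\mu\rangle$ with $\mu$ divergence free, and the transported measure in the final step is $Y$-valued so the restricted test class still detects it. Since the paper itself is silent on this point, your proposal is, if anything, more honest than the source; just do not present the membership $\nabla G(U)\nabla A(U)^{-1}\mu\in Y$ as a consequence of (H1)--(H5) and (H2')---it is an extra assumption on the generating sequence.
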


\bigskip

\subsection{Examples for the extended case}\label{ss:Extension}
\subsubsection{Incompressible Euler system}
Consider the system 
\begin{align}
 \label{eq:E1}
        \partial_t \vu + \Div_x(\vu\otimes \vu) + \nabla_x p &= 0,\\ \label{eq:E2}
          \Div_x\vu &= 0, 
\end{align}
for an unknown vector field $\vu\colon Q \to \R^n$
and scalar  $p\colon Q \to \R$. 

In this case we forget in the formulation about the divergence free constrain as this information will be carried in the definition of the space $Y$, namely $u=\vu$, $A\equiv Id$, $F(u)=\vu\otimes \vu$ and $L$ corresponds to the gradient of the pressure. The entropy $\eta = \frac 12 |\vu|^2$ and $G = \nabla \eta = \vu$. We see that the space $Y$ is the space of divergence free smooth vector fields $\vu$. 

A direct calculation yields $q_\alpha = \frac 12 \vu_\alpha |\vu|^2$ and therefore $\overline{L}_\alpha = \frac 12 |\vu|^2 e_\alpha$, where $e_\alpha$ is the unit vector in the $\alpha$ direction. We easily check that $\overline{L}_\alpha\cdot \partial_\alpha u = \frac 12 |\vu|^2 \Div_x \vu = 0$ if $G(u) = \vu \in Y$.

%
%
%

\subsubsection{Incompressible magnetohydrodynamics}
Let us consider the system
\begin{align}
           \partial_t \vu + \Div_x(\vu\otimes  \vu - b\otimes b) + \nabla_x (p+\frac{1}{2}|b|^2) &= 0
            \\
            \partial_t b + \Div_x(\vu\otimes b-b \otimes \vu)  &= 0,\\
              \Div_x\vu &= 0
            \\
            \Div_xb&= 0 
\end{align} 
for unknown vector functions $\vu\colon Q \to \R^n$ and $b\colon Q \to \R^n$ and an unknown scalar function $p\colon Q\to \R$. It is sufficient to require that $\Div_x b$ is equal to zero at the initial time as the information is then transported. The system  describes the motion of an ideal electrically conducting fluid, see e.g.~\cite[Chapter VIII]{landau}.

Here $u=(\vu,b)^T$, $A\equiv Id$, $L=(\nabla_x (p+\frac{1}{2}|b|^2),0)^T$ and 
$$F(v,b)=(F_1,...,F_n)(v,b) = \left(\begin{array}{c}v\otimes v-b\otimes b\\v\otimes b-b \otimes v\end{array}\right).$$
Similarly as in the previous case, $\eta = \frac12 (|\vu|^2 + |b|^2)$ and $G = \nabla \eta = u = (\vu,b)^T$.
The space $Y$ is the space of divergence free smooth vector fields in the first (velocity) variable, a common feature in the incompressible problems. 

For this system the entropy fluxes are $q_\alpha = \frac 12(|\vu|^2 + |b|^2)\vu_\alpha - (\vu\cdot b) b_\alpha$, consequently we derive that $$\overline{L}_\alpha = \left(\begin{array}{c} \frac12 (|\vu|^2 + |b|^2) e_\alpha \\ -(\vu\cdot b) e_\alpha \end{array}\right).$$ 
Hence $\overline{L}_\alpha \cdot \partial_\alpha u = \frac12 (|\vu|^2 + |b|^2)\Div_x\vu - (\vu\cdot b)\Div_x b$ and we see that the first term is zero whenever $G(u) \in Y$ and the second term is zero whenever $\Div_x b = 0$ at the initial time $t = 0$.

\subsubsection{Nonhomogeneous incompressible Euler system}
Here we consider the system 
\begin{align}
 \label{eq:NE1}
\partial_t \rho + \Div_x(\rho\vu) &=0, \\ \label{eq:NE2}
\partial_t (\rho\vu) + \Div_x(\rho\vu\otimes \vu) + \nabla_x p &= 0,\\ \label{eq:NE3}
          \Div_x\vu &= 0, 
\end{align}
for an unknown vector field $\vu\colon Q \to \R^n$ and scalar fields $\rho\colon Q \to \R_+$ and $p\colon Q \to \R$. Similarly as in the compressible Euler example \ref{ss:CE} we choose the state variables to be $u = (u_1,u_2)^T = (\rho,\sqrt{\rho}\vu)^T$. Then $A(u) = (u_1,\sqrt{u_1}u_2)^T$, $L=(0,\nabla_x p)^T$ and 
\begin{equation*}
F(u)=(F_1,\ldots,F_n)(u) = \left(\begin{array}{c}\sqrt{u_1}u_2\\ u_2\otimes u_2\end{array}\right).
\end{equation*}
The divergence-free condition \eqref{eq:NE3} allows us to choose as the entropy the function
\begin{equation*}
\eta(u)=\frac{1}{2}(|u_2|^2+u_1^2).
\end{equation*}
Indeed, in this case we get 
$$G(u) = \left(\begin{array}{c}u_1 - \frac{|u_2|^2}{2u_1} \\ \frac{u_2}{\sqrt{u_1}}\end{array}\right)$$
and the space $Y$ will be again the space of smooth functions $u$ such that the divergence of the second component of $G(u)$ is equal to zero, i.e. the space of states with divergence-free velocities.

The entropy fluxes are now $q_\alpha = \frac 12 (u_1^2 + |u_2|^2)\frac{u_{2\alpha}}{\sqrt{u_1}}$ and we derive that 
$$\overline{L}_\alpha(u) = \left(\begin{array}{c}-\frac 14 \sqrt{u_1}u_{2\alpha} \\ \frac 12 u_1^{3/2}e_\alpha\end{array}\right).$$
A straightforward computation reveals that $\overline{L}_\alpha\cdot \partial_\alpha u = \frac 12 u_1^2 \Div_x \frac{u_2}{\sqrt{u_1}}$ and thus it equals to zero whenever $u$ is such that $G(u) \in Y$.

\subsubsection{Nonhomogeneous incompressible magnetohydrodynamics}
We consider the system
\begin{align}
 \label{eq:NEMHD1}
\partial_t \rho + \Div_x(\rho\vu) &=0, \\ \label{eq:NEMHD2}
\partial_t (\rho\vu) + \Div_x(\rho\vu\otimes \vu - b\otimes b) + \nabla_x (p + \frac12 |b|^2) &= 0,\\ \label{eq:NEMHD3}
\partial_t b + \Div_x(v\otimes b - b \otimes v) &= 0, \\ \label{eq:NEMHD4}
          \Div_x\vu &= 0, \\ \label{eq:NEMHD5}
					\Div_x b &= 0,
\end{align}
with unknown vector fields $\vu\colon Q \to \R^n$ and $b\colon Q \to \R^n$ and scalar fields $\rho\colon Q \to \R_+$ and $p\colon Q \to \R$. Again if we assume $\Div_x b = 0$ at the time $t = 0$, this information gets transported.

In order to be able to continuously extend the fluxes $F_\alpha$ for zero densities, we can not proceed the same way as in the previous cases. Therefore we now choose the state variables to be $u = (u_1,u_2,u_3)^T = (\rho,\vu,b)^T$. Then $A(u) = (u_1,u_1u_2,u_3)^T$, $L=(0,\nabla_x (p + \frac 12 |b|^2,0)^T$ and 
\begin{equation*}
F(u)=(F_1,\ldots,F_n)(u) = \left(\begin{array}{c}u_1u_2\\ u_1u_2\otimes u_2 - u_3 \otimes u_3 \\ u_2\otimes u_3 - u_3 \otimes u_2\end{array}\right).
\end{equation*}
Similarly as in the case of nonhomogeneous Euler equations, we can choose as the entropy the function
\begin{equation*}
\eta(u)=\frac{1}{2}(u_1^2 + u_1|u_2|^2+|u_3|^2).
\end{equation*}
and obtain
$$G(u) = \left(\begin{array}{c}u_1 - \frac12 |u_2|^2 \\ u_2 \\ u_3 \end{array}\right).$$
The space $Y$ will be once again the space of smooth functions $u$ such that the divergence of the second component of $G(u)$ is equal to zero, i.e. the space of states with divergence-free velocities.

The entropy fluxes are $q_\alpha = \frac 12 (u_1^2 + u_1|u_2|^2 + |u_3|^2)u_{2\alpha} - u_2\cdot u_3 u_{3\alpha}$ and consequently
$$\overline{L}_\alpha(u) = \left(\begin{array}{c}0 \\ \frac 12 (u_1^2 + |u_3|^2) e_\alpha \\ -u_2\cdot u_3 e_\alpha \end{array}\right).$$
In particular we see that $\overline{L}_\alpha\cdot \partial_\alpha u = \frac 12 (u_1^2 + |u_3|^2) \Div_x u_2 - u_2\cdot u_3 \Div_x u_3$. Similarly as in the example of incompressible magnetohydrodynamics we conclude that $\overline{L}_\alpha\cdot \partial_\alpha u = 0$ for all $u$ such that $G(u) \in Y$ if we moreover assume that $\Div_x b = 0$ at the initial time $t = 0$.

\appendix

\section{Auxilary facts}
We include a lemma similar to~\cite[Lemma A.1]{CleoTz}, however under weaker assumptions, see the discussion in Remark~\ref{remark}. The proof follows similar lines, however we include it for reader's convenience. 
\begin{lemma}\label{Lem:estimates}
Let (H1)-(H3) and \eqref{H4thanos}  be satisfied. Then for each $\alpha=1,\ldots, d$ 
\begin{equation}\label{rel-est-appendix}
F_\alpha(u|U)\le C\eta(u|U)
\end{equation}
for each $u\in \overline X$ and bounded $U$.
\end{lemma}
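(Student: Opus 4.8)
The plan is to fix a compact set $K\Subset X$ containing the range of the bounded solution $U$, so that $\eta(U)$, $G(U)$, $\nabla A(U)^{-1}$ and $\nabla F_\alpha(U)$ are uniformly bounded, and then to split $\overline X$ into the far field $\{|u|\ge R\}$ and the bounded region $\overline X\cap\overline{B_R}$. For the far field I would argue directly from the definitions. Since $\eta(u|U)=\eta(u)-\eta(U)-G(U)\cdot(A(u)-A(U))$, the first hypothesis in \eqref{H4thanos} gives $|G(U)\cdot A(u)|\le C|A(u)|=o(\eta(u))$, while the constant terms stay bounded and $\eta(u)\to\infty$ by \eqref{eta-infty}; hence $\eta(u|U)\ge\tfrac12\eta(u)$ for $|u|\ge R$. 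The second hypothesis in \eqref{H4thanos} gives $|F_\alpha(u)|\le C(1+\eta(u))$, and combined with the bounded $U$-terms and $|A(u)|=o(\eta(u))$ this yields $|F_\alpha(u|U)|\le C'\eta(u)$ for $|u|$ large, so $|F_\alpha(u|U)|\le 2C'\eta(u|U)$ there, uniformly in $U\in K$. This is precisely the step where the weaker assumption \eqref{H4thanos} suffices in place of \eqref{H4thanos-a}: one never needs the ratio $|F_\alpha(u|U)|/\eta(u|U)$ to vanish, only to be bounded, and $\eta(u|U)\sim\eta(u)$ already dominates the at-most-linear growth of $|F_\alpha(u|U)|$.

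On the bounded region I would use the standard convexity mechanism, separating a neighborhood of the diagonal from its complement. Near the diagonal both $u$ and $U$ lie in the interior, where $A,\eta,F_\alpha$ are $C^2$; writing $u_s:=U+s(u-U)$ and expanding by Taylor's theorem, the linear terms cancel because $\nabla\eta(U)=G(U)\nabla A(U)$ and $\nabla A(U)^{-1}\nabla A(U)=\mathrm{Id}$, leaving
\[
\eta(u|U)=\int_0^1(1-s)\,(u-U)^{T}\bigl[\nabla^2\eta(u_s)-G(U)\cdot\nabla^2A(u_s)\bigr](u-U)\,ds
\]
together with an analogous representation for $F_\alpha(u|U)$. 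At $s=0$ the bracketed matrix is the one in \eqref{H3}, hence positive definite, so by continuity and compactness it stays $\ge c_0\,\mathrm{Id}$ for $|u-U|$ small; this gives $\eta(u|U)\ge c_0|u-U|^2/2$, while the $C^2$ bounds give $|F_\alpha(u|U)|\le C_0|u-U|^2/2$, and the ratio is controlled. Away from the diagonal, on the compact set $\{u\in\overline X\cap\overline{B_R},\ U\in K,\ |u-U|\ge\delta\}$, the continuous function $\eta(u|U)$ is strictly positive and hence bounded below, while $|F_\alpha(u|U)|$ is bounded above, so the ratio is again controlled. Taking the largest of the resulting constants completes the estimate.

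The step I expect to be the genuine obstacle is the strict positivity of $\eta(u|U)$ away from the diagonal, especially at the boundary $\partial X$, where $A$ and $\eta$ are merely continuous and $\nabla A$ may degenerate. The conceptual reason it holds is that \eqref{H3} says exactly that $\eta$ is a strictly convex function of the conserved variable $w=A(u)$, so $\eta(u|U)$ is the corresponding Bregman divergence and vanishes only when $A(u)=A(U)$, i.e.\ only at $u=U$ for interior $U$; continuity up to $\overline X$ (Remark~\ref{remark11}) then carries the strict positivity to the boundary, where $A(u)\neq A(U)$. Making this lower bound uniform in the abstract setting is the delicate point, although for each concrete system in Section~\ref{App} it can instead be verified by hand.
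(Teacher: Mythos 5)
Your decomposition and estimates track the paper's own proof quite closely: the far-field part (using the first condition of \eqref{H4thanos} to get $\eta(u|U)\ge \tfrac12\eta(u)-c$ and the second to get $|F_\alpha(u|U)|\le C(1+\eta(u))$) is Step 2 of the paper's argument, and the near-diagonal quadratic bounds from (H3) are its Step 1, carried out there in the conserved variable $v=A(u)$ via $H=\eta\circ A^{-1}$ and $Q_\alpha=F_\alpha\circ A^{-1}$ rather than by your Taylor expansion along $u_s=U+s(u-U)$; both versions of that computation are fine. The genuine gap is exactly the one you flag yourself and then leave open: the uniform positive lower bound for $\eta(u|U)$ on the intermediate compact region $\{u\in\overline X\cap\overline{B_R},\ U\in K,\ |u-U|\ge\delta\}$. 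Compactness reduces this to pointwise strict positivity, but that is the nontrivial claim, and the Bregman-divergence heuristic does not by itself deliver it in the abstract setting: strict convexity of $H$ gives $H(w|w_0)>0$ for $w\ne w_0$ only along segments contained in the domain of $H$, and since $A$ is nonlinear the image $A(X)$ need not be convex even though $X$ is; moreover global injectivity of $A$ on $\overline X$ (needed to pass from ``$\eta(u|U)=0$ forces $A(u)=A(U)$'' to $u=U$) is not among the hypotheses. Deferring this to case-by-case verification in Section 2 is not acceptable for a proof of the abstract lemma, which is stated for all systems satisfying (H1)--(H3) and \eqref{H4thanos}.

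The paper closes this gap with a short quantitative argument that you should incorporate in place of the compactness step. For $u\notin D_\varepsilon$ (an $\varepsilon$-neighbourhood of the compact set $D\Subset X$ containing the values of $U$), let $u^*\in\partial D_\varepsilon$ be the exit point of the segment $[U,u]\subset X$ (convexity of $X$ is used here); then $|u^*-U|\ge\varepsilon$, hence $|A(u^*)-A(U)|\ge\tilde\varepsilon$ by nondegeneracy of $\nabla A$ on $\overline{D_\varepsilon}$, and the Step 1 uniform convexity estimate gives $\eta(u^*|U)=H(A(u^*)|A(U))\ge c_1\tilde\varepsilon^2$. Since $t\mapsto H\bigl(A(U)+t(A(u)-A(U))\,\big|\,A(U)\bigr)$ vanishes together with its derivative at $t=0$ and $H$ is convex, this function is nondecreasing, and one concludes $\eta(u|U)\ge \eta(u^*|U)\ge c_1\tilde\varepsilon^2>0$ uniformly in $u$ and in $U\in D$. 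With this lower bound the additive constant in $|F_\alpha(u|U)|\le c(1+\eta(u|U))$ is absorbed and the intermediate region is handled without any appeal to pointwise positivity on $\partial X$. Everything else in your proposal is sound, including the observation that only boundedness, not vanishing, of $|F_\alpha(u|U)|/\eta(u|U)$ is needed, which is what allows \eqref{H4thanos} to replace \eqref{H4thanos-a}.
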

\begin{proof}
Let $D\subset X$ be a compact set and let $D_\varepsilon:=\{y+\alpha: y\in D, |\alpha|<\varepsilon\}$.
The condition (H3) does not provide that $\eta$ is necessarily convex, nevertheless we can introduce an entropy $H=\eta\circ A^{-1}$ which is already uniformly convex in $D$.  If we define now
\begin{equation}
H(A(u)|A(U)):=H(A(u))-H(A(U))-\nabla_vH(A(U))(A(u)-A(U))
\end{equation}
we immediately observe that 
\begin{equation}
\eta(u|U)=\eta(u)-\eta(U)-G(U)(A(u)-A(U))=H(A(u)|A(U))
\end{equation}
just due to chain rule and an observation that $\nabla \eta(u)=\nabla_vH(A(u))\nabla A(u)$. We also introduce a flux in new variables, i.e., $Q_\alpha=F_\alpha\circ A^{-1}$. Similarly we  define the relative flux of~$Q_\alpha$
\begin{equation}
Q_\alpha(A(u)|A(U)):=Q_\alpha(A(u))-Q_\alpha(A(U))-\nabla_vQ_\alpha(A(U))(A(u)-A(U)).
\end{equation}
As $\nabla F_\alpha(U)=\nabla_vQ(A(U))\nabla A(U)$, thus we observe that
\begin{equation}
F_\alpha(u|U)=F_\alpha(u)-F_\alpha(U)-\nabla F_\alpha(U)[\nabla A(U)]^{-1}(A(u)-A(U))=Q_\alpha(A(u)|A(U)).
\end{equation}

\noindent
{\it Step 1.} 
Consider first the case $u\in D_\varepsilon, U\in D$. Observe that there exists a constant $c_1>0$ such that
\begin{align}\label{eta-squares}
\eta(u|U)&=H(A(u)|A(U)) \ge c_1|A(u)-A(U)|^2
\end{align}
where $c_1=\inf_{y\in D_\varepsilon}\nabla^2_vH(A(y))$, which is positive by the uniform convexity of $H$ on the set ${\rm Im}(A(D_\varepsilon))$.
Next we  estimate the relative flux as follows
\begin{align}\nonumber
|F_\alpha(u|U)|&=|Q_\alpha(A(u)|A(U))|\le \sup\limits_{y\in D_\varepsilon}\nabla^2_v(A(y))\ |A(u)-A(U)|^2\le c_2\eta(u|U).
\end{align}
where the constant $c_4$ includes $\sup_{y\in D_\epsilon}|\nabla^2F(y)|$ and  $\sup_{y\in D_\epsilon}|\nabla^2A(y)|$.

{\it Step 2.}
Let now $u\in  X\setminus D_\varepsilon$ and $U\in D$.
Observe that since $U$ is bounded then there exist constants $k_1, k_2$ such that 
\begin{equation}
\eta(u|U)=\eta(u)-\eta(U) -G(U)(A(u)-A(U))\ge \eta(u)-k_1-k_2|A(u)|. 
\end{equation}
Observe that for any $R>0$ 
\begin{equation}
|A(u)|\le \sup\limits_{|y|\le R}|A(y)|+\sup\limits_{|y|> R}\left\{\frac{|A(y)|}{\eta(y)}\right\}\eta(u)
\end{equation}
and thus by the first condition of \eqref{H4thanos} and as $A$ is continuous on $\overline X$ we can claim there exists a constant $k_3$ such that 
\begin{equation}\label{lemma-statement}
|A(u)|\le k_3+\frac{1}{2k_2}\eta(u) 
\end{equation}
and hence
\begin{equation}\label{eta-etarelative}
\eta(u|U)\ge \frac{1}{2}\eta(u)-c_5.
\end{equation}
We further estimate using \eqref{H4thanos}$_2$ 
\begin{align}\nonumber
|F_\alpha(u|U)|&=|F_\alpha(u)-F_\alpha(U)-\nabla F_\alpha(U)\nabla A(U)^{-1}(A(u)-A(U))|\\\label{f-rel}
&\le|F_\alpha(u)|+K_1|A(u)|+K_2\le c_6(1+\eta(u)).
\end{align}
Observe then that from \eqref{f-rel} together with \eqref{eta-etarelative}, we conclude
\begin{equation}\label{a12}
|F_\alpha(u|U)|\le c(1+\eta(u|U)).
\end{equation}
We shall denote by  $u^*\in \partial D_\varepsilon$  such a vector that there exists $t^*\in (0,1)$ such that
$u^*=(1-t^*)u+t^*U$. Obviously $|u^*-U|\ge \varepsilon$. Thus since $\nabla A$ is nonsingular then there exists $\tilde\varepsilon>0$ such that $|A(u^*)-A(U)|\ge \tilde\varepsilon$.
Notice that the function 
$$\R_+\ni t\mapsto H((t(A(u)-A(U))+A(U))|A(U))$$ is monotone and hence $H(A(u)|A(U))\ge H(A(u^*)|A(U))$.
As we have already shown in \eqref{eta-squares} we have that $H(A(u^*)|A(U))\ge c_1 |A(u^*)-A(U)|^2$, consequently we obtain that 
$$\eta(u|U)=H(A(u)|A(U))\ge c_1\tilde\varepsilon^2.$$
Thus we conclude \eqref{rel-est-appendix} for all $u\in X$ from \eqref{a12}. By the continuity of 
$F_\alpha(\cdot|U)$ and $\eta(\cdot|U)$ condition $\eqref{rel-est-appendix}$ holds for all $u\in \overline X$.
%
\end{proof}
For reader's convenience we recall here the slicing lemma (cf.~\cite[Theorem 1.5.1]{Evans-weak}), which is used for showing desintegration of the concentration measure. Let then $\mu$ be a finite, nonnegative Radon measure on $\R^{n+m}$ and let $\sigma$ be the canonical projection of $\mu$ onto $\R^n$, which means that $\sigma(E)\equiv
\mu(E\times\R^m)$ for each Borel set $E\subset\R^n$.  
\begin{lemma}\label{slicing}
For $\sigma-a.e$ point $x\in\R^n$ there exists a Radon probablity measure $\nu_x$ on $\R^m$, such that 
\begin{itemize}
\item[(i)] the mapping $ x\mapsto\int_{\R^n}f(x,y)d\nu_x(y)$ is  $\sigma-$measurable \\
and
\item[(ii)] $\int_{\R^{n+m}}f(x,y)d\mu(x,y)=\int_{\R^n}\left(\int_{\R^m}f(x,y)d\nu_x(y)\right)d\sigma(x)$ for each bounded, continuous $f$. 
\end{itemize}
\end{lemma}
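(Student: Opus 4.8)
The plan is to construct the slices $\nu_x$ by the classical combination of the Radon--Nikodym theorem and the Riesz representation theorem, isolating the delicate point---a single $\sigma$-null exceptional set valid simultaneously for all test functions---and handling it by a separability argument. Since $\mu$ is finite, so is its canonical projection $\sigma$, and both theorems are available.

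First I would fix $g \in C_c(\R^m)$ and introduce the finite signed measure $\lambda_g$ on $\R^n$ defined by $\lambda_g(E) := \int_{E \times \R^m} g(y)\, d\mu(x,y)$. The bound $|\lambda_g(E)| \le \|g\|_\infty\, \mu(E\times\R^m) = \|g\|_\infty\, \sigma(E)$ shows $\lambda_g \ll \sigma$, so Radon--Nikodym produces $\rho_g \in L^\infty(\sigma)$ with $\|\rho_g\|_{L^\infty(\sigma)} \le \|g\|_\infty$ and $\lambda_g(E) = \int_E \rho_g\, d\sigma$. For $\sigma$-a.e.\ fixed $x$ the assignment $g \mapsto \rho_g(x)$ is linear and positive (i.e.\ $g \ge 0 \Rightarrow \rho_g \ge 0$ a.e.), but a priori the null set on which these properties fail depends on $g$.

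Next I would remove this dependence. Choose a countable $\Q$-linear family $\{g_k\}$ in $C_c(\R^m)$ whose uniform closure is all of $C_c(\R^m)$ (possible by separability), and discard the countable union $N$ of the $\sigma$-null sets where linearity, positivity, or the bound $|\rho_{g_k}(x)| \le \|g_k\|_\infty$ fail. For $x \notin N$ the map $g_k \mapsto \rho_{g_k}(x)$ is a bounded positive $\Q$-linear functional, extending uniquely by continuity to a positive linear functional $L_x$ on $C_c(\R^m)$; Riesz representation then gives a Radon measure $\nu_x$ with $\int g\, d\nu_x = \rho_g(x)$. To see $\nu_x$ is a probability measure $\sigma$-a.e., take $g_j \uparrow 1$ with compact support; monotone convergence on both sides of $\int_E \rho_{g_j}\, d\sigma = \int_{E\times\R^m} g_j\, d\mu$ yields $\int_E \nu_x(\R^m)\, d\sigma = \sigma(E)$ for every Borel $E$, whence $\nu_x(\R^m) = 1$ $\sigma$-a.e. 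Finally, property (i) holds for $g \in C_c(\R^m)$ since $x \mapsto \int g\, d\nu_x = \rho_g(x)$ is a Radon--Nikodym derivative; property (ii) for $f(x,y) = \phi(x)g(y)$ follows by extending $\lambda_g(E) = \int_E \rho_g\, d\sigma$ from indicators $\ONE_E$ through simple functions to $\phi \in C_c(\R^n)$, giving $\int \phi g\, d\mu = \int_{\R^n} \phi(x)\big(\int g\, d\nu_x\big)\, d\sigma$. Linear combinations $\sum_i \phi_i(x) g_i(y)$ are dense in $C_0(\R^{n+m})$ by Stone--Weierstrass, so finiteness of $\mu$ and dominated convergence promote the identity to all $f \in C_c(\R^{n+m})$ and, by a monotone truncation $f \mapsto f\psi_R$, to every bounded continuous $f$; the same approximation upgrades the $g$-wise measurability of $\rho_g$ to joint $\sigma$-measurability of $x \mapsto \int f(x,y)\, d\nu_x(y)$.

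The main obstacle is precisely the third step: manufacturing one $\sigma$-null set off which the whole family $x \mapsto \nu_x$ consists of genuine measures, rather than a separate density $\rho_g$ for each $g$. Separability of $C_c(\R^m)$ together with the uniform continuity and positivity bounds on $g \mapsto \rho_g$ are exactly what make the countable reduction and the subsequent Riesz step legitimate; without them one obtains only a measurable field of functionals, not of measures. Everything afterward is routine approximation justified by $\mu(\R^{n+m}) < \infty$.
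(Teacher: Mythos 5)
Your proof is correct, but note that the paper does not actually prove this lemma: it is recalled verbatim from the cited reference (Evans, \emph{Weak convergence methods}, Theorem 1.5.1) purely for the reader's convenience. Your argument --- Radon--Nikodym densities $\rho_g$ for a countable uniformly dense family in $C_c(\R^m)$, a single exceptional $\sigma$-null set, Riesz representation to recover $\nu_x$, and product-function approximation for (i) and (ii) --- is precisely the standard proof given in that reference, so there is nothing to compare against within the paper itself.
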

\begin{remark}\label{rem-slicing}
Note that in case we consider measures associated to sequences which are bounded in one of the variables, we can even claim that the corresponding canonical projection is absolutely continuous with respect to the Lebesgue measure. In the case considered in the current paper we  deal with a domain $[0,T]\times\T^d$. Considered sequences are bounded in $L^\infty(0,T; L^1(\T^d))$. Then the corresponding concentration measure $m$ admits a desintegration of the form
\begin{equation}
m=m^t(dx)\otimes dt,
\end{equation}
where $t\mapsto m^t$ is bounded and weak-star measurable as a map from $[0,T]$ to ${\mathcal M}^+(\T^d)$. 
\end{remark}

\section{Convex functions} \label{s:convex}
We include here the facts used to show that compressible Euler system satisfies the assumptions of the main theorem. We consider a convex function $M:\R_+\to\R_+$, which is continuous, $M(v)=0$ iff $v=0$ and
\begin{equation}
\lim\limits_{v\to0}\frac{M(v)}{v}=0, \quad \lim\limits_{v\to\infty}\frac{M(v)}{v}=\infty.
\end{equation}
A function satisfying the above properties is called an $N-$function. 
We define  the Fenchel conjugate to $M$ as $M^*(\xi):=\sup_{\rho}(\xi\cdot\rho-M(\rho))$. For functions $v,w:\Omega\to\R_+$ such that 
$\int_{\Omega}M(v)dx<\infty$ and $\int_{\Omega}M^*(w)dx<\infty$ the following estimate (called Fenchel-Young inequality) holds
\begin{equation}
vw\le M(v)+M^*(w).
\end{equation}
To compare $N-$functions we will say that $M_1$ is essentially stronger than $M_2$ if $M_2(v)\le M_1(av)$  for all $v\ge v_0\ge0$ for all $a>0$ and some $v_0(a)$. 
For the purpose of estimates in Section~\ref{App} we will use the following lemma.
\begin{lemma}
Let $M_i, M^*_i$, $i=1,2$ be two complementary pairs of $N-$functions. Then the following conditions are equivalent. 
\begin{enumerate}
\item[$(i)$] $M_1$ is essentially stronger than $M_2$;
\item[$(ii)$] $M_2^*$ is essentially stronger than $M_1^*$;
\item[$(iii)$] $\forall\lambda>0$, $\lim\limits_{v\to\infty}\frac{M_2(\lambda v)}{M_1(v)}=0. $
\end{enumerate} 
\end{lemma}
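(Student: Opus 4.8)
The plan is to prove the cycle of equivalences by first reducing the "essentially stronger" relation to the limit condition $(iii)$, and then transporting everything to the complementary functions by a duality argument carried out on the inverse functions. Throughout I use only that each $M_i$ is a continuous convex $N$-function with $M_i(0)=0$, that its Fenchel conjugate $M_i^*$ is again an $N$-function, and the Fenchel--Young inequality.

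First I would show $(i)\Leftrightarrow(iii)$. The implication $(iii)\Rightarrow(i)$ is immediate: applying $(iii)$ with $\lambda=1/a$ gives $M_2(w/a)/M_1(w)\to0$, so $M_2(w/a)\le M_1(w)$ for $w$ large, and the substitution $w=av$ yields $M_2(v)\le M_1(av)$ eventually, which is $(i)$. For $(i)\Rightarrow(iii)$, fix $\lambda>0$ and $\varepsilon\in(0,1)$; applying $(i)$ with $a=\varepsilon/\lambda$ gives $M_2(\lambda v)\le M_1(\varepsilon v)$ for $v$ large, while convexity together with $M_1(0)=0$ gives $M_1(\varepsilon v)\le\varepsilon M_1(v)$. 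Hence $M_2(\lambda v)\le\varepsilon M_1(v)$ eventually, and since $\varepsilon$ is arbitrary this is $(iii)$.

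The core of the argument is the passage to $M_i^*$, for which I would isolate two facts. The first is a reformulation of $(iii)$ via the (right) inverses $M_i^{-1}$: writing $M_1\gg M_2$ for "$M_1$ essentially stronger than $M_2$", I claim this is equivalent to $\lim_{v\to\infty}M_1^{-1}(v)/M_2^{-1}(v)=0$. Indeed, from $M_2(\lambda v)\le M_1(v)$ (valid eventually for every $\lambda$) the substitution $v=M_1^{-1}(w)$ and an application of the increasing map $M_2^{-1}$ give $\lambda M_1^{-1}(w)\le M_2^{-1}(w)$, so the ratio is at most $1/\lambda$ for $w$ large; as $\lambda$ is arbitrary the limit is zero, and the converse runs the same computation backwards. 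The second fact is the fundamental inequality
\[
v\le M^{-1}(v)\,(M^*)^{-1}(v)\le 2v,\qquad v\ge0,
\]
valid for any complementary pair. Its upper bound is the Fenchel--Young inequality evaluated at $a=M^{-1}(v)$, $b=(M^*)^{-1}(v)$, namely $ab\le M(a)+M^*(b)=2v$; the lower bound follows since the supporting-line bound $M(s)\ge\frac{M(a)}{a}(s-a)$ (a consequence of convexity and $M(0)=0$) forces $M^*(v/a)\le v$ with $a=M^{-1}(v)$, hence $(M^*)^{-1}(v)\ge v/M^{-1}(v)$.

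Combining these, I would conclude $(iii)\Rightarrow(ii)$: the fundamental inequality gives $v/M_i^{-1}(v)\le(M_i^*)^{-1}(v)\le 2v/M_i^{-1}(v)$, whence
\[
\frac{(M_2^*)^{-1}(v)}{(M_1^*)^{-1}(v)}\le 2\,\frac{M_1^{-1}(v)}{M_2^{-1}(v)}\longrightarrow 0,
\]
which, by the inverse-function reformulation applied to the pair $(M_2^*,M_1^*)$, says precisely that $M_2^*$ is essentially stronger than $M_1^*$. The reverse implication $(ii)\Rightarrow(iii)$ then follows by applying the whole chain to the complementary pair and invoking the biduality $M^{**}=M$, which holds because each $N$-function is convex, continuous and vanishes only at the origin. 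The step I expect to be the main obstacle is this duality passage: one must check that all the inverse functions are genuinely well defined (strict monotonicity of $N$-functions), that $M^*$ inherits the full $N$-function structure, and that the two-sided estimate above is applied consistently. Once that technical groundwork is secured, the equivalences collapse to the elementary limit manipulations above.
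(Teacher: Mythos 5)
Your proof is correct. The paper gives no argument for this lemma at all --- it simply defers to \cite[Chapter 2.2, Th.~2]{Rao} --- and what you have written is essentially the standard proof from that reference: $(i)\Leftrightarrow(iii)$ by convexity of $M_1$ through the origin (so that $M_1(\varepsilon v)\le\varepsilon M_1(v)$), and $(iii)\Leftrightarrow(ii)$ via the two-sided estimate $v\le M^{-1}(v)\,(M^*)^{-1}(v)\le 2v$ combined with the reformulation of ``essentially stronger'' in terms of the inverse functions. The technical points you flag are indeed the only remaining checks and all hold: an $N$-function is strictly increasing on $(0,\infty)$ (since $M(v_1)\le\frac{v_1}{v_2}M(v_2)<M(v_2)$ for $0<v_1<v_2$), its conjugate is again an $N$-function, and $M^{**}=M$ follows from Fenchel--Moreau since $M$ is convex and continuous.
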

The proof of the above fact follows from simple estimates, see~\cite[Chapter 2.2.,~Th.~2]{Rao} for details. 

\bibliographystyle{abbrv}
\bibliography{hyper} 

\begin{thebibliography}{10}

\bibitem{AlBo}
J.~J. Alibert and G.~Bouchitt{\'e}.
\newblock Non-uniform integrability and generalized {Y}oung measures.
\newblock {\em J. Convex Anal.}, 4(1):129--147, 1997.

\bibitem{ball}
J.~M. Ball.
\newblock A version of the fundamental theorem for {Y}oung measures.
\newblock In {\em P{DE}s and continuum models of phase transitions ({N}ice,
  1988)}, volume 344 of {\em Lecture Notes in Phys.}, pages 207--215. Springer,
  Berlin, 1989.

\bibitem{BeFeNo}
P.~Bella, E.~Feireisl, and A.~Novotn{\'y}.
\newblock Dimension reduction for compressible viscous fluids.
\newblock {\em Acta Appl. Math.}, 134:111--121, 2014.

\bibitem{BrDeLeSz2011}
Y.~Brenier, C.~De~Lellis, and L.~Sz{{\'e}}kelyhidi, Jr.
\newblock Weak-strong uniqueness for measure-valued solutions.
\newblock {\em Comm. Math. Phys.}, 305(2):351--361, 2011.

\bibitem{BrFe}
J.~B\v{r}ezina and E.~Feireisl.
\newblock Measure-valued solutions to the complete {E}uler system.
\newblock {\em https://arxiv.org/abs/1702.04870}, 02 2017.

\bibitem{BrKrMa}
J.~B\v{r}ezina, O.~Kreml, and V.~M\'{a}cha.
\newblock Dimension reduction for the full {N}avier-{S}tokes-{F}ourier system.
\newblock {\em J. Math. Fluid Mech.}, 19(4):659--683, 2017.

\bibitem{CleoTz}
C.~Christoforou and A.~Tzavaras.
\newblock Relative entropy for hyperbolic-parabolic systems and application to
  the constitutive theory of thermoviscoelasticity.
\newblock {\em to appear in Arch. Ration. Mech. Anal.},
  https://arxiv.org/abs/1603.08176, 03 2016.

\bibitem{Da2000}
C.~M. Dafermos.
\newblock {\em Hyperbolic conservation laws in continuum physics}, volume 325
  of {\em Grundlehren der Mathematischen Wissenschaften [Fundamental Principles
  of Mathematical Sciences]}.
\newblock Springer-Verlag, Berlin, 2000.

\bibitem{Dafermos1985}
C.~M. Dafermos and W.~J. Hrusa.
\newblock Energy methods for quasilinear hyperbolic initial-boundary value
  problems. {A}pplications to elastodynamics.
\newblock {\em Arch. Rational Mech. Anal.}, 87(3):267--292, 1985.

\bibitem{tmna}
T.~Debiec, P.~Gwiazda, K.~{\L}yczek, and A.~{\'S}wierczewska-Gwiazda.
\newblock Relative entropy method for measure-valued solutions in natural
  sciences.
\newblock {\em to appear in Topol. Meth. Nonlinear Analysis}, 09 2017.

\bibitem{DeStTz2}
S.~Demoulini, D.~M.~A. Stuart, and A.~E. Tzavaras.
\newblock A variational approximation scheme for three-dimensional
  elastodynamics with polyconvex energy.
\newblock {\em Arch. Ration. Mech. Anal.}, 157(4):325--344, 2001.

\bibitem{tzavaras1}
S.~Demoulini, D.~M.~A. Stuart, and A.~E. Tzavaras.
\newblock Weak-strong uniqueness of dissipative measure-valued solutions for
  polyconvex elastodynamics.
\newblock {\em Arch. Ration. Mech. Anal.}, 205(3):927--961, 2012.

\bibitem{DiPerna}
R.~J. DiPerna.
\newblock Measure-valued solutions to conservation laws.
\newblock {\em Arch. Rational Mech. Anal.}, 88(3):223--270, 1985.

\bibitem{DiMa87}
R.~J. DiPerna and A.~J. Majda.
\newblock Oscillations and concentrations in weak solutions of the
  incompressible fluid equations.
\newblock {\em Comm. Math. Phys.}, 108(4):667--689, 1987.

\bibitem{Evans-weak}
L.~C. Evans.
\newblock {\em Weak convergence methods for nonlinear partial differential
  equations}, volume~74 of {\em CBMS Regional Conference Series in
  Mathematics}.
\newblock Published for the Conference Board of the Mathematical Sciences,
  Washington, DC; by the American Mathematical Society, Providence, RI, 1990.

\bibitem{Evans}
L.~C. Evans and R.~F. Gariepy.
\newblock {\em Measure theory and fine properties of functions}.
\newblock Textbooks in Mathematics. CRC Press, Boca Raton, FL, revised edition,
  2015.

\bibitem{FGSW16}
E.~Feireisl, P.~Gwiazda, A.~{{\'S}wierczewska-Gwiazda}, and E.~Wiedemann.
\newblock Dissipative measure-valued solutions to the compressible
  {N}avier-{S}tokes system.
\newblock {\em Calc. Var. Partial Differential Equations}, 55(6):Art. 141, 20,
  2016.

\bibitem{FeJiNo}
E.~Feireisl, B.~J. Jin, and A.~Novotn{{\'y}}.
\newblock Relative entropies, suitable weak solutions, and weak-strong
  uniqueness for the compressible {N}avier-{S}tokes system.
\newblock {\em J. Math. Fluid Mech.}, 14(4):717--730, 2012.

\bibitem{FMT16}
U.~S. Fjordholm, S.~Mishra, and E.~Tadmor.
\newblock On the computation of measure-valued solutions.
\newblock {\em Acta Numer.}, 25:567--679, 2016.

\bibitem{GiTz}
J.~Giesselmann and A.~E. Tzavaras.
\newblock Singular limiting induced from continuum solutions and the problem of
  dynamic cavitation.
\newblock {\em Arch. Ration. Mech. Anal.}, 212(1):241--281, 2014.

\bibitem{GSW2015}
P.~Gwiazda, A.~{\'S}wierczewska-Gwiazda, and E.~Wiedemann.
\newblock Weak-strong uniqueness for measure-valued solutions of some
  compressible fluid models.
\newblock {\em Nonlinearity}, 28(11):3873--3890, 2015.

\bibitem{GwiWie}
P.~Gwiazda and E.~Wiedemann.
\newblock Generalized entropy method for the renewal equation with measure
  data.
\newblock {\em Commun. Math. Sci.}, 15(2):577--586, 2017.

\bibitem{landau}
L.~D. Landau and E.~M. Lifshitz.
\newblock {\em Course of theoretical physics. {V}ol. 6}.
\newblock Pergamon Press, Oxford, second edition, 1987.
\newblock Fluid mechanics, Translated from the third Russian edition by J. B.
  Sykes and W. H. Reid.

\bibitem{majda}
A.~Majda.
\newblock {\em Compressible fluid flow and systems of conservation laws in
  several space variables}, volume~53 of {\em Applied Mathematical Sciences}.
\newblock Springer-Verlag, New York, 1984.

\bibitem{MiMiPe2}
P.~Michel, S.~Mischler, and B.~t. Perthame.
\newblock General entropy equations for structured population models and
  scattering.
\newblock {\em C. R. Math. Acad. Sci. Paris}, 338(9):697--702, 2004.

\bibitem{MiMiPe}
P.~Michel, S.~Mischler, and B.~t. Perthame.
\newblock General relative entropy inequality: an illustration on growth
  models.
\newblock {\em J. Math. Pures Appl. (9)}, 84(9):1235--1260, 2005.

\bibitem{Pe2007}
B.~Perthame.
\newblock {\em Transport equations in biology}.
\newblock Frontiers in Mathematics. Birkh{\"a}user Verlag, Basel, 2007.

\bibitem{Pro}
G.~Prodi.
\newblock Un teorema di unicit\`a per le equazioni di {N}avier-{S}tokes.
\newblock {\em Ann. Mat. Pura Appl. (4)}, 48:173--182, 1959.

\bibitem{Rao}
M.~M. Rao and Z.~D. Ren.
\newblock {\em Theory of {O}rlicz spaces}, volume 146 of {\em Monographs and
  Textbooks in Pure and Applied Mathematics}.
\newblock Marcel Dekker, Inc., New York, 1991.

\bibitem{Ser}
J.~Serrin.
\newblock The initial value problem for the {N}avier-{S}tokes equations.
\newblock In {\em Nonlinear {P}roblems ({P}roc. {S}ympos., {M}adison, {W}is.
  1962)}, pages 69--98. Univ. of Wisconsin Press, Madison, Wis., 1963.

\bibitem{T79}
L.~Tartar.
\newblock Compensated compactness and applications to partial differential
  equations.
\newblock In {\em Nonlinear analysis and mechanics: {H}eriot-{W}att
  {S}ymposium, {V}ol. {IV}}. Pitman, Boston, Mass.-London, 1979.

\bibitem{Wiedemann}
E.~Wiedemann.
\newblock Weak-strong uniqueness in fluid dynamics.
\newblock {\em https://arxiv.org/abs/1705.04220}, 05 2017.

\bibitem{young}
L.~Young.
\newblock Generalized curves and the existence of an attained absolute minimum
  in the calculus of variations.
\newblock {\em Comptes Rendus des S{\'e}ances de la Soci{\'e}t{\'e} des
  Sciences et des Lettres de Varsovie}, 30:211--234, 1937.

\end{thebibliography}
\end{document}